 \newtheorem{thm}{Theorem}[section]
 \newtheorem{cor}[thm]{Corollary}
 \newtheorem{lem}[thm]{Lemma}
 \newtheorem{prop}[thm]{Proposition}
 \theoremstyle{definition}
 \theoremstyle{remark}
 \numberwithin{equation}{section}
\newcommand{\LLL}{\mathcal{L}}
\newcommand{\HH}{\mathcal{H}}
\newcommand{\VV}{\mathcal{V}}
\newcommand{\N}{\mathbb{N}}
\newcommand{\F}{\mathcal{F}}
\newcommand{\LL}{\mathfrak{L}}
\newcommand{\M}{\mathcal{M}}
\newcommand{\X}{\mathfrak{X}}
\newcommand{\n}{\nabla}
\newcommand{\nn}{\widetilde{\n}}
\newcommand{\tD}{\widetilde{\n}^{\parallel}}
\newcommand{\tT}{\widetilde{T}}
\newcommand{\f}{\phi}
\newcommand{\tg}{\widetilde{g}}
\newcommand{\tn}{\widetilde\nabla}
\newcommand{\tR}{\widetilde{R}}
\newcommand{\tF}{\widetilde{F}}
\newcommand{\tS}{\widetilde{S}}
\newcommand{\tQ}{\widetilde{Q}}
\newcommand{\al}{\alpha}
\newcommand{\ta}{\theta}
\newcommand{\om}{\omega}
\newcommand{\D}{\mathrm{d}}
\newcommand{\Span}{\mathrm{span}}
\newcommand{\tr}{\mathrm{tr}}
\newcommand{\Div}{\mathrm{div}}
\newcommand{\Alt}{\mathrm{Alt}}
\newcommand{\sx}{\mathop{\mathfrak{S}}\limits_{x,y,z}}
\newcommand{\thmref}[1]{Theorem~\ref{#1}}
\newcommand{\lemref}[1]{Lemma~\ref{#1}}
\newcommand{\corref}[1]{Corollary~\ref{#1}}
\begin{document}

\title[Pair of associated Schouten-van Kampen connections]
{Pair of associated Schouten-van Kampen connections adapted to an almost paracontact almost paracomplex Riemannian
structure}

\author[H. Manev]{Hristo Manev}

\address{
  	Department of Medical Informatics, Biostatistics and E-Learning\\
	Faculty of Public Health\\
	Medical University of Plovdiv \\
	15A 	Vasil Aprilov Blvd\\
	4002 Plovdiv, Bulgaria
}

\email{hristo.manev@mu-plovdiv.bg}

\author[M. Manev]{Mancho Manev}

\address{
  Department of Algebra and Geometry \\
  Faculty of Mathematics and Informatics \\
  University of Plovdiv Paisii Hilendarski  \\
  24 Tzar Asen St \\
  4000 Plovdiv, Bulgaria\\
	\& \\
	Department of Medical Informatics, Biostatistics and E-Learning\\
	Faculty of Public Health\\
	Medical University of Plovdiv \\
	15A 	Vasil Aprilov Blvd\\
	4002 Plovdiv, Bulgaria
}

\email{mmanev@uni-plovdiv.bg}
\thanks{H.M. and M.M. were partially supported by project MU19-FMI-020 of the Scientific Research Fund, University of Plovdiv Paisii Hilendarski, Bulgaria; H.M. was partially supported by National Scientific Program ”Young Researchers and Post-Doctorants”, Bulgaria.}

\subjclass[2010]{Primary 53C25; Secondary 53C07, 53C50, 53D15.}

\keywords{Distribution, Schouten-van Kampen affine connection, almost paracontact Riemannian manifold, paracontact distribution}


\begin{abstract}
There are introduced and studied a pair of associated Schouten-van Kampen affine connections
adapted to the paracontact distribution and an almost paracontact almost paracomplex Riemannian structure generated by
the pair of associated metrics and their Levi-Civita connections.
By means of the constructed non-symmetric connections,
the basic classes of the manifolds with the considered structure are characterized.
Curvature properties of the studied connections are obtained. A family of examples on a Lie group is constructed.
\end{abstract}

\maketitle

\section{Introduction}

The notion of almost paracontact structure on a differentiable manifold of arbitrary dimension was introduced in \cite{Sato76}. The restriction of this structure on the paracontact distribution is an almost product structure, studied and classified in \cite{Nav83}.
On a manifold equipped with an almost paracontact structure can be considered two kinds of compatible metrics.
If the structure endomorphism induces an isometry on the paracontact distribution of each tangent fibre, then the manifold has an almost paracontact Riemannian structure as in \cite{Sato77,AdatMiya77,SatoMats79}.
In the other case, when the induced transformation is antiisometry, the manifold has a structure of an almost paracontact metric manifold (\cite{NakZam,ZamNak}), where the metric is semi-Riemannian of type $(n+1,n)$.

The object of our considerations are the almost paracontact almost paracomplex Riemannian manifolds. The restriction of the
introduced almost paracontact structure on the paracontact distribution is traceless, i.e. it is an almost paracomplex structure. In \cite{ManSta}, it is given a classification of these manifolds where they are classified under the name of almost paracontact Riemannian manifolds of type $(n,n)$. Their investigation is continued in \cite{ManTav57,ManTav2}.

The Schouten-van Kampen connection preserves by parallelism a pair of complementary dis\-tri\-bu\-tions on a differentiable manifold
endowed with an affine connection \cite{SvK,Ia,BejFarr}. Using the considered connection in \cite{Sol}, there are studied hyperdistributions in Riemannian manifolds.
In \cite{Olsz} and \cite{ManFil}, there are studied the Schouten-van
Kampen connection adapted to an almost (para)contact metric structure and an almost contact B-metric structure, respectively. The studied connection is not natural in general on these manifolds because it preserves the structure tensors except the structure endomorphism.

In the present paper, we introduce and investigate pair of Schou\-ten-van Kampen connections
associated to the pair of Levi-Civita connections and adapted to the paracontact distribution of an almost paracontact almost paracomplex Riemannian manifold.
We characterize the classes of considered manifolds by means of the constructed non-symmetric connections and we obtain some curvature properties.

\section{Almost Paracontact Almost Paracomplex Riemannian Manifolds}

Let us consider an \emph{almost paracontact almost paracomplex Riemannian manifold} denoted by $(\M,\f,\xi,\eta,g)$. This means that $\M$
is a $(2n+1)$-dimensional ($n\in\N$) differentiable manifold equipped with a compatible Rie\-mannian
metric $g$ and an almost
paracontact structure $(\f,\xi,\eta)$, where $\f$ is an endomorphism
of the tangent bundle $T\M$, $\xi$ is a characteristic vector field and $\eta$ is its dual 1-form, such that the following
algebraic relations are satisfied:
\begin{equation}\label{strM}
\begin{array}{c}
\f\xi = 0,\qquad \f^2 = I - \eta \otimes \xi,\qquad
\eta\circ\f=0,\qquad \eta(\xi)=1,\qquad \tr \f=0,\\
g(\f x, \f y) = g(x,y) - \eta(x)\eta(y),\qquad g(x, \xi) = \eta(x),
\end{array}
\end{equation}
denoting the identity transformation on $T\M$ by $I$ (\cite{Sato76}, \cite{ManTav57}).
In the latter equalities and further, $x$, $y$, $z$, $w$ will stand for arbitrary elements of $\X(\M)$, the Lie algebra of tangent vector fields, or vectors in the tangent space $T_p\M$ of $\M$ at an arbitrary
point $p$ in $\M$.

Almost paracontact almost paracomplex Riemannian manifolds, known also as \emph{almost paracontact Riemannian manifolds of type $(n,n)$}, are classified in \cite{ManSta}, where eleven basic classes $\F_1$, $\F_2$, $\dots$, $\F_{11}$ are introduced. This classification is made with respect
to the tensor $F$ of type (0,3) defined by
\begin{equation*}\label{F=nfi}
F(x,y,z)=g\bigl( \left( \nabla_x \f \right)y,z\bigr),
\end{equation*}
where $\n$ is the Levi-Civita connection of $g$.
The following identities are valid:
\begin{equation}\label{F-prop}
\begin{array}{l}
F(x,y,z)=F(x,z,y)=-F(x,\f y,\f z)+\eta(y)F(x,\xi,z)
+\eta(z)F(x,y,\xi),\\
(\n_x\eta)y=g(\n_x\xi,y)=-F(x,\f y, \xi).
\end{array}
\end{equation}

The special class $\F_0$,
determined by the condition $F=0$, is the intersection of the basic classes.

The associated metric $\tg$ of $g$ on $\M$ is defined by
$\tg(x,y)=g(x,\f y)+\eta(x)\eta(y)$. It is shown that $\tg$ is a compatible metric with $(\M,\f,\xi,\eta)$ and it is a pseudo-Riemannian metric of signature $(n + 1, n)$. Therefore,
$(\M,\f,\xi,\eta,\tg)$ is also an almost paracontact almost paracomplex manifold but with a pseudo-Riemannian metric.

The following 1-forms (known also as Lee forms)
are associated with $F$:
\begin{equation*}\label{t}
\theta(z)=g^{ij}F(e_i,e_j,z),\quad
\theta^*(z)=g^{ij}F(e_i,\f e_j,z), \quad \omega(z)=F(\xi,\xi,z),
\end{equation*}
where $\left(g^{ij}\right)$ is the inverse matrix of the
matrix $\left(g_{ij}\right)$ of $g$ with respect to
a basis $\left\{\xi;e_i\right\}$ $(i=1,2,\dots,2n)$ of
$T_p\M$.

Further, we use the following characteristic conditions of the
basic classes: \cite{ManSta}
\begin{equation}\label{Fi}
\begin{array}{rl}
\F_{1}: &F(x,y,z)=\frac{1}{2n}\bigl\{g(\f x,\f y)\ta(\f^2 z) +g(\f x,\f z)\ta(\f^2 y)
\\
&\phantom{F(x,y,z)=\frac{1}{2n}\,\,}
-g(x,\f y)\ta(\f z)-g(x,\f z)\ta(\f y)
\bigr\};\\
\F_{2}: &F(\xi,y,z)=F(x,\xi,z)=0,\quad
              \sx F(x,y,\f z)=0,\quad \ta=0;\\
\F_{3}: &F(\xi,y,z)=F(x,\xi,z)=0,\quad
              \sx F(x,y,z)=0;\\
\F_{4}: &F(x,y,z)=\frac{1}{2n}\ta(\xi)\bigl\{g(\f x,\f y)\eta(z)+g(\f x,\f z)\eta(y)\bigr\};\\
\F_{5}: &F(x,y,z)=\frac{1}{2n}\ta^*(\xi)\bigl\{g( x,\f y)\eta(z)+g(x,\f z)\eta(y)\bigr\};\\
%
\F_{6}: &F(x,y,z)=F(x,y,\xi)\eta(z)+F(x,z,\xi)\eta(y),\quad \\
                &F(x,y,\xi)=F(y,x,\xi)=F(\f x,\f y,\xi),\quad \ta=\ta^*=0; \\
\F_{7}: &F(x,y,z)=F(x,y,\xi)\eta(z)+F(x,z,\xi)\eta(y),\quad \\
         &       F(x,y,\xi)=-F(y,x,\xi)=F(\f x,\f y,\xi); \\
\F_{8}: &F(x,y,z)=F(x,y,\xi)\eta(z)+F(x,z,\xi)\eta(y),\quad \\
         &       F(x,y,\xi)= F(y,x,\xi)=-F(\f x,\f y,\xi); \\
\F_{9}: &F(x,y,z)=F(x,y,\xi)\eta(z)+F(x,z,\xi)\eta(y),\quad \\
         &       F(x,y,\xi)=-F(y,x,\xi)=-F(\f x,\f y,\xi); \\
\F_{10}: &F(x,y,z)=-\eta(x)F(\xi,\f y,\f z); \\
\F_{11}:
&F(x,y,z)=\eta(x)\left\{\eta(y)\om(z)+\eta(z)\om(y)\right\},
\end{array}
\end{equation}
where $\sx$ is the cyclic sum by three arguments $x,y,z$.

The relations between the Lee forms and the divergences  $\Div$ and $\Div^*$ regarding $g$ and $\tg$, respectively, follow directly from  \eqref{F-prop} and they have the form
\begin{equation}\label{divtr}
\ta(\xi)=-\Div^*(\eta),\qquad \ta^*(\xi)=-\Div(\eta).
\end{equation}

As a corollary, the covariant derivative of $\xi$ with respect to $\n$ is determined in each class as follows
\begin{equation}\label{Fi:nxi}
\begin{array}{l}
\F_{1}:\; \n\xi=0;\qquad\qquad
\F_{2}:\; \n\xi=0;\qquad\qquad
\F_{3}:\; \n\xi=0;\\
\F_{4}:\; \n\xi=\frac{1}{2n}\Div^*(\eta)\,\f;\qquad\qquad
\F_{5}:\; \n\xi=\frac{1}{2n}\Div(\eta)\,\f^2;\\
%
\F_{6}:\;  g(\n_x\xi,y)=g(\n_{\f y}\xi,\f x)=g(\n_{\f x}\xi,\f y),\quad \Div(\eta)=\Div^*(\eta)=0; \\
\F_{7}:\; g(\n_x\xi,y)=-g(\n_{\f y}\xi,\f x)=g(\n_{\f x}\xi,\f y); \\
\F_{8}:\; g(\n_x\xi,y)=g(\n_{\f y}\xi,\f x)=-g(\n_{\f x}\xi,\f y); \\
\F_{9}:\; g(\n_x\xi,y)=-g(\n_{\f y}\xi,\f x)=-g(\n_{\f x}\xi,\f y); \\
\F_{10}:\; \n\xi=0; \qquad\qquad
\F_{11}:\; \n\xi=\eta\otimes\f\om^{\sharp},
\end{array}
\end{equation}
where $\sharp$ denotes the musical isomorphism of $T^*\M$ in $T\M$ given by $g$.

Let $\nn$ be the Levi-Civita connection of $\tg$.
Let us denote the potential of $\nn$ regarding $\n$ by $\Phi$, i.e. $\Phi(x,y)=\nn_x y - \n_x y$.

By the well-known Koszul equality for $\tg$ and $\tn$, using \eqref{strM} and \eqref{F-prop},
we obtain the relation between $F$ and $\tF(x,y,z)=\tg(( \nn_x \f )y,z)$
as well as the expression of $\Phi$ in terms of $F$ as follows
\begin{equation}\label{tFF}
\begin{array}{l}
  2\tF(x,y,z)=F(\f y,z,x)-F(y,\f z,x)+F(\f z,y,x)-F(z,\f y,x)\\
  \phantom{2\tF(x,y,z)=}
  +\eta(x)\{F(y,z,\xi)-F(\f z,\f y,\xi)+F(z,y,\xi)-F(\f y,\f z,\xi)\}\\
  \phantom{2\tF(x,y,z)=}
  +\eta(y)\{F(x,z,\xi)-F(\f z,\f x,\xi)+F(x,\f z,\xi)\}\\
  \phantom{2\tF(x,y,z)=}
  +\eta(z)\{F(x,y,\xi)-F(\f y,\f x,\xi)+F(x,\f y,\xi)\},
\end{array}
\end{equation}
\begin{gather}
\begin{array}{l}\label{PhiF}
2\Phi(x,y,z)=F(x,y,\f z)+F(y,x,\f z)-F(\f z,x,y)\\
\phantom{2\Phi(x,y,z)=}
-\eta(x)\{F(y,z,\xi)-F(\f z,\f y,\xi)\}	\\
\phantom{2\Phi(x,y,z)=}
-\eta(y)\{F(x,z,\xi)-F(\f z,\f x,\xi)\}\\
\phantom{2\Phi(x,y,z)=}
-\eta(z)\{F(\xi,x,y)-F(x,y,\xi)+F(x,\f y,\xi)-\omega(\f x)\eta(y)\\
\phantom{2\Phi(x,y,z)=-\eta(z)\{F(\xi,x,y)\,}
-F(y,x,\xi)+F(y,\f x,\xi)-\omega(\f y)\eta(x)\}.
\end{array}
\end{gather}

Obviously, the special class $\F_0$ is determined by any of the following equivalent conditions: $F=0$, $\Phi=0$, $\tF=0$ and $\n=\nn$.

The properties of $\nn_x\xi$ when $(\M,\f,\xi,\eta,\tg)$ belongs to each of the basic classes are determined in a similar way as in \eqref{Fi:nxi}.

\section{Remarkable metric connections regarding the paracontact distribution on the considered manifolds}

Let us consider an almost paracontact almost paracomplex Riemannian manifold $(\M,\f,\xi,\eta,g)$. 
 %
Using the structure $(\xi,\eta)$ on $\M$, there are determined the following two distributions
in the tangent bundle $T\M$ of $\M$
\begin{equation*}\label{HV}
  \HH=\ker(\eta),\qquad \VV=\Span(\xi),
\end{equation*}
called the horizontal distribution and the vertical distribution, respectively.
They are mutually complementary in $T\M$ and orthogonal with respect to $g$ and $\tg$, i.e. $\HH\oplus\VV =T\M$
and $\HH\bot\VV$; moreover, $\HH$ is known also as the paracontact distribution.

We consider the corresponding horizontal and vertical projectors $h:T\M\mapsto\HH$ and $v:T\M\mapsto\VV$.
Since $x=\f^2x+\eta(x)\xi$ for any $x$ in $T\M$, we have
$h(x)=\f^2x$ and $v(x)=\eta(x)\xi$
or equivalently
\begin{equation}\label{Xhv}
  x^h=\f^2x,\qquad x^v=\eta(x)\xi.
\end{equation}

\subsection{The Schouten-van Kampen connections associated to the Levi-Civita connections}

Let us consider the Schouten-van Kampen connections $\n^{\parallel}$ and $\tD$ associated to $\n$ and $\nn$, respectively,
and adapt\-ed to the pair $(\HH, \VV)$. These connections are defined (locally in \cite{SvK}, see also \cite{Ia}) by
\begin{equation}\label{SvK}
\begin{array}{c}
  \n^{\parallel}_x y = (\n_x y^h)^h + (\n_x y^v)^v,\\[6pt]
  \tD_x y = (\nn_x y^h)^h + (\nn_x y^v)^v.
\end{array}
\end{equation}
The latter equalities imply the parallelism of $\HH$ and $\VV$ with
respect to $\n^{\parallel}$ and $\tD$.
Taking into account \eqref{Xhv}, we express the formulae of $\n^{\parallel}$ and $\tD$ in terms of $\n$ and $\nn$, respectively, as follows (cf. \cite{Sol})
\begin{equation}\label{SvK=n}
  \n^{\parallel}_x y = \n_x y -\eta(y)\n_x \xi+(\n_x \eta)\!(y)\,\xi,
\end{equation}
\begin{equation}\label{tSvK=n}
  \tD_x y = \nn_x y -\eta(y)\nn_x \xi+(\nn_x \eta)(y)\xi.
\end{equation}

Obviously, $\n^{\parallel}$ and $\tD$ exist on $(\M,\f,\xi,\eta,g,\tg)$ in each class regarding $F$.

Let us consider the potentials $Q^{\parallel}$ of $\n^{\parallel}$ with respect to $\n$, $\tQ^{\parallel}$ of $\tD$ with respect to $\nn$ and the torsions $T^{\parallel}$ of $\n^{\parallel}$, $\tT$ of $\tD$ defined by $Q^{\parallel}(x,y)=\n^{\parallel}_xy-\n_xy$, $\tQ^{\parallel}(x,y)=\tD_xy-\nn_xy$,  $T^{\parallel}(x,y)= \n^{\parallel}_xy-\n^{\parallel}_yx-[x,y]$ and $\tT(x,y)= \tD_xy-\tD_yx-[x,y]$. Then, they have the following expressions
\begin{gather}\label{Q}
Q^{\parallel}(x,y)=-\eta(y)\n_x \xi+(\n_x \eta)\!(y)\,\xi,
\\
\label{tQ}
\tQ^{\parallel}(x,y)=-\eta(y)\nn_x \xi+(\nn_x \eta)(y)\xi,
\\
\label{T}
T^{\parallel}(x,y)=\eta(x)\n_y \xi-\eta(y)\n_x \xi+\D\eta(x,y)\,\xi,
\\
\label{tT}
\tT^{\parallel}(x,y)=\eta(x)\nn_y \xi-\eta(y)\nn_x \xi+\D\eta(x,y)\xi.
\end{gather}

\begin{thm}\label{thm:D-T}
The Schouten-van Kampen connections $\n^{\parallel}$ and $\tD$ are the unique affine connections having torsions of the form \eqref{T} and \eqref{tT}, respectively, and they preserve the structure $(\xi, \eta,g,\tg)$.
\end{thm}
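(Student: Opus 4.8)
The plan is to prove the two halves of the statement separately: that $\n^{\parallel}$ (respectively $\tD$) has a torsion of the form \eqref{T} (respectively \eqref{tT}) and preserves the stated objects, and that these two properties together pin down the connection uniquely. The torsion formulae are already recorded in \eqref{T}--\eqref{tT}, so the first half reduces to the preservation claim.

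\textbf{Preservation.} I would read this off \eqref{SvK=n} (equivalently off the potential \eqref{Q}). Using $\eta(\xi)=1$ together with $(\n_x\eta)(\xi)=g(\n_x\xi,\xi)=\tfrac12 x\bigl(g(\xi,\xi)\bigr)=0$, formula \eqref{SvK=n} gives at once $\n^{\parallel}_x\xi=\n_x\xi-\n_x\xi=0$. Substituting \eqref{SvK=n} into $(\n^{\parallel}_x\eta)(y)=x\bigl(\eta(y)\bigr)-\eta(\n^{\parallel}_xy)$ and using $\eta(\n_x\xi)=0$ and $\eta(\xi)=1$, the correction terms cancel and $\n^{\parallel}\eta=0$. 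For $\n^{\parallel}g=0$ I would expand $(\n^{\parallel}_xg)(y,z)$ via \eqref{SvK=n}, use $\n g=0$ and the identity $g(\n_x\xi,y)=(\n_x\eta)(y)$ (itself a consequence of $\n g=0$ and $g(x,\xi)=\eta(x)$); the four terms produced by the two $Q^{\parallel}$-contributions cancel in pairs. An even shorter route is to observe that \eqref{SvK} builds $\n^{\parallel}$ from $\n$ and the $g$-orthogonal projectors $h,v$ onto $\HH,\VV$, which makes the parallelism of $\HH,\VV$ and all three equalities $\n^{\parallel}\xi=\n^{\parallel}\eta=\n^{\parallel}g=0$ transparent. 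For $\tD$ the computation is word for word the same, with $\nn,\tg$ and \eqref{tSvK=n} in place of $\n,g$ and \eqref{SvK=n}; the one point to record is that $\eta$ is also the $\tg$-dual $1$-form of $\xi$ and $\tg(\xi,\xi)=1$, since $\tg(x,\xi)=g(x,\f\xi)+\eta(x)\eta(\xi)=\eta(x)$. Hence $\tD\xi=\tD\eta=\tD\tg=0$. Note that $\n^{\parallel}$ does \emph{not} preserve $\tg$ in general: once $\n^{\parallel}\xi=\n^{\parallel}\eta=\n^{\parallel}g=0$ are known one computes $(\n^{\parallel}_x\tg)(y,z)=g\bigl(y,(\n^{\parallel}_x\f)z\bigr)$, which is nonzero because $\n^{\parallel}\f\neq0$ in general; symmetrically $\tD$ does not preserve $g$. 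So ``preserve the structure $(\xi,\eta,g,\tg)$'' is to be read as: $\n^{\parallel}$ preserves $(\xi,\eta,g)$ and $\tD$ preserves $(\xi,\eta,\tg)$.

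\textbf{Uniqueness.} I would argue as follows. Let $D$ be an affine connection whose torsion equals the right-hand side of \eqref{T} and which preserves $g$. Set $B:=D-\n^{\parallel}$, a $(1,2)$-tensor field, and $\beta(x,y,z):=g\bigl(B(x,y),z\bigr)$. Since $D$ and $\n^{\parallel}$ have the same torsion, $B(x,y)-B(y,x)=0$, so $\beta$ is symmetric in its first two arguments; subtracting $Dg=0$ and $\n^{\parallel}g=0$ gives $g\bigl(B(x,y),z\bigr)+g\bigl(y,B(x,z)\bigr)=0$, so $\beta$ is skew in its last two arguments. The standard six-term cycle
\[
\beta(x,y,z)=\beta(y,x,z)=-\beta(y,z,x)=-\beta(z,y,x)=\beta(z,x,y)=\beta(x,z,y)=-\beta(x,y,z)
\]
then forces $\beta\equiv0$, and non-degeneracy of $g$ yields $B=0$, i.e. $D=\n^{\parallel}$. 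The argument for $\tD$ is identical, with $\tg$ (non-degenerate, of signature $(n+1,n)$) and \eqref{tT} in place of $g$ and \eqref{T}.

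None of the steps is deep. The two places that want a little care are the bookkeeping with the pair of dual structures --- namely checking that $\eta$ is the $\tg$-dual of $\xi$ and $\tg(\xi,\xi)=1$, which is exactly what makes the $\tD$-case a transcription of the $\n^{\parallel}$-case --- and, in the uniqueness part, the observation that it is metricity (and not merely the preservation of $\xi$ and $\eta$) that, together with the prescribed torsion, rigidifies the connection.
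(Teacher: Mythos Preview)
Your proof is correct and follows essentially the same approach as the paper: preservation is read off \eqref{SvK=n}/\eqref{tSvK=n}, and uniqueness is the standard fact that a metric connection is determined by its torsion --- the paper phrases this via Cartan's bijection \eqref{TQ}--\eqref{QT} between potentials and torsions, while you give the equivalent six-term cycle argument directly. Your remark that ``preserve $(\xi,\eta,g,\tg)$'' must be read as $\n^{\parallel}$ preserving $(\xi,\eta,g)$ and $\tD$ preserving $(\xi,\eta,\tg)$ is a useful clarification, and it matches what the paper's own proof actually verifies.
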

\begin{proof}
Using \eqref{SvK=n}, we get directly $\n^{\parallel}\xi=\n^{\parallel}\eta=\n^{\parallel}g=0$, i.e. $\xi$, $\eta$ and $g$ are parallel with respect to $\n^{\parallel}$.
Since $\n^{\parallel}$ is a metric connection, it is completely determined by its torsion $T^{\parallel}$.
The spaces of torsions $\{T\}$ and of potentials $\{Q\}$
are isomorphic and the bijection is given by (\cite{Car25})
\begin{gather}
T (x,y,z) = Q(x,y,z) - Q(y,x,z) ,\label{TQ}\\
2Q(x,y,z) = T (x,y,z) - T (y,z,x) + T(z,x,y).\label{QT}
\end{gather}
We verify directly that the potential $Q^{\parallel}$ and the torsion $T^{\parallel}$ of $\n^{\parallel}$, determined by \eqref{Q} and \eqref{T}, respectively, satisfy the latter equalities. This completes the proof for $\n^{\parallel}$. Similarly, we prove for $\tD$.
\end{proof}


\begin{thm}\label{thm:D=n}
The Schouten-van Kampen connection $\n^{\parallel}$ 
coincides with $\n$ if and only if $(\M,\f,\xi,\allowbreak{}\eta,g)$ belongs to the class $\F_1\oplus\F_2\oplus\F_3\oplus\F_{10}$.
\end{thm}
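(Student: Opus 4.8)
The starting observation is that $\n^{\parallel}=\n$ is equivalent to the vanishing of the potential $Q^{\parallel}$ given in \eqref{Q}. First I would show $\n^{\parallel}=\n\iff\n\xi=0$: evaluating \eqref{Q} at $y=\xi$ and using $\eta(\xi)=1$ together with $(\n_x\eta)(\xi)=g(\n_x\xi,\xi)=\tfrac12 x\bigl(g(\xi,\xi)\bigr)=0$ yields $Q^{\parallel}(x,\xi)=-\n_x\xi$, so $Q^{\parallel}\equiv0$ forces $\n\xi=0$; conversely, if $\n\xi=0$ then both terms of \eqref{Q} vanish since $(\n_x\eta)(y)=g(\n_x\xi,y)$. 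Next, by the second identity in \eqref{F-prop}, $(\n_x\eta)(y)=-F(x,\f y,\xi)$, so $\n\xi=0$ is equivalent to $F(x,\f y,\xi)=0$ for all $x,y$; since $\f$ maps $T\M$ onto $\HH$ and $F(x,\xi,\xi)=0$ always (set $y=z=\xi$ in the first identity of \eqref{F-prop}), this is in turn equivalent to $F(x,y,\xi)=0$ for all $x,y$. Thus the theorem reduces to the purely algebraic claim: $F(x,y,\xi)=0$ identically $\iff$ $F\in\F_1\oplus\F_2\oplus\F_3\oplus\F_{10}$.

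For the ``if'' part I would verify $F(x,y,\xi)=0$ on each of the four basic components by means of \eqref{Fi}: for $\F_1$ every term on the right-hand side contains a factor involving $\f\xi$ or $\f^2\xi$, both of which vanish; for $\F_2$ and $\F_3$ it follows from $F(x,\xi,z)=0$ together with the symmetry $F(x,y,z)=F(x,z,y)$; for $\F_{10}$ one has $F(x,y,\xi)=-\eta(x)F(\xi,\f y,\f\xi)=0$, again because $\f\xi=0$. Linearity then gives $F(x,y,\xi)=0$ on the whole direct sum, hence $\n\xi=0$ and $\n^{\parallel}=\n$.

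For the ``only if'' part — which I expect to be the main obstacle — assume $F(x,y,\xi)=0$. Then also $F(x,\xi,z)=0$ by symmetry, and the first identity of \eqref{F-prop} collapses to $F(x,y,z)=-F(x,\f y,\f z)$. In particular $F(\xi,\cdot,\cdot)$ is a symmetric, $\f$-anti-invariant bilinear form that vanishes whenever an argument is $\xi$, i.e.\ a form on $\HH$; consequently $F''(x,y,z):=-\eta(x)F(\xi,\f y,\f z)$ is a well-defined $\F_{10}$-component (one checks directly, using $F(\xi,\f y,\f z)=-F(\xi,y,z)$ and $\f\xi=0$, that $F''$ obeys the defining relation of $\F_{10}$ and the $F$-identities), and $F':=F-F''$ vanishes whenever any of its arguments equals $\xi$. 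Such a ``horizontal'' tensor lies in $\F_1\oplus\F_2\oplus\F_3$: this is precisely the subclass determined by $F(\xi,y,z)=F(x,\xi,z)=0$, as is evident from \eqref{Fi} (each of the remaining classes $\F_4,\dots,\F_9,\F_{11}$ involves $\eta$ in an essential way and therefore contains non-horizontal tensors), and the decomposition of \cite{ManSta} is compatible with this splitting. Hence $F=F'+F''\in\F_1\oplus\F_2\oplus\F_3\oplus\F_{10}$. The delicate points are the verification that $F''$ genuinely belongs to $\F_{10}$ and the identification of $\F_1\oplus\F_2\oplus\F_3$ with the horizontal $F$-tensors; an equivalent route is to decompose $\n\xi$ through \eqref{Fi:nxi} and observe that the seven contributions coming from $\F_4,\dots,\F_9,\F_{11}$ occupy linearly independent subspaces of $(0,2)$-tensors, so that $\n\xi=0$ forces each of these seven components of $F$ to vanish.
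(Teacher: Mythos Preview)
Your argument is correct, and the reduction $\n^{\parallel}=\n\iff\n\xi=0$ is the same first step the paper makes (the paper reads it off \eqref{SvK=n} rather than \eqref{Q}, but that is cosmetic). Where you diverge is in the classification step: the paper simply invokes the precomputed list \eqref{Fi:nxi} of $\n\xi$ in each basic class and observes that $\n\xi$ is identically zero precisely in $\F_1$, $\F_2$, $\F_3$, $\F_{10}$---the implicit point being that the contributions to $\n\xi$ from the remaining classes live in independent invariant subspaces, so they cannot cancel in a direct sum. You instead translate $\n\xi=0$ into $F(\cdot,\cdot,\xi)=0$, verify it class by class via \eqref{Fi}, and for the converse construct an explicit splitting $F=F'+F''$ with $F''\in\F_{10}$ and a horizontal $F'\in\F_1\oplus\F_2\oplus\F_3$. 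This is more work, but it makes the ``only if'' direction self-contained rather than relying on the orthogonality of the decomposition in \cite{ManSta}; the alternative route you mention at the end---decomposing $\n\xi$ via \eqref{Fi:nxi} and appealing to linear independence of the seven pieces---is exactly the paper's proof.
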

\begin{proof}
According \eqref{SvK=n}, $\n^{\parallel}$ coincides with $\n$ if and only if $\n_x \xi=0$ for any $x$. Having in mind \eqref{Fi:nxi}, this vanishing holds only in the class $\F_1\oplus\F_2\oplus\F_3\oplus\F_{10}$.
\end{proof}
\begin{thm}\label{thm:tD=nn}
The Schouten-van Kampen connection $\tD$ 
coincides with $\nn$ if and only if $(\M,\f,\xi,\allowbreak{}\eta,\tg)$ belongs to the class $\F_1\oplus\F_2\oplus\F_3\oplus\F_{9}$.
\end{thm}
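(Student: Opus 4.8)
The plan is to argue exactly as in the proof of \thmref{thm:D=n}, with $\tg$ and its Levi-Civita connection $\nn$ in place of $g$ and $\n$. First I would observe that, by \eqref{tSvK=n}, the connection $\tD$ coincides with $\nn$ if and only if $\nn_x\xi=0$ for every $x$: the two correction terms on the right of \eqref{tSvK=n} involve only $\nn\xi$ and $\nn\eta$, and $\nn\eta$ vanishes exactly when $\nn\xi$ does, because $\nn$ is metric for $\tg$ and $\eta=\tg(\cdot,\xi)$, so that $(\nn_x\eta)(y)=\tg(\nn_x\xi,y)$; conversely, if $\tD=\nn$ then $\nn_x\xi=\tD_x\xi=0$, since $\tD$ preserves the structure $(\xi,\eta,g,\tg)$ by \thmref{thm:D-T}. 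Thus the assertion reduces to determining in which basic classes $\nn\xi=0$.

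Next I would compute $\nn\xi$ class by class. Since $(\M,\f,\xi,\eta,\tg)$ is again an almost paracontact almost paracomplex structure, the tilde-analogue of the second line of \eqref{F-prop} holds, $(\nn_x\eta)(y)=\tg(\nn_x\xi,y)=-\tF(x,\f y,\xi)$; as $\tF(x,\xi,\xi)=0$ automatically, this gives $\nn\xi=0$ if and only if $\tF(\cdot,\cdot,\xi)\equiv0$. Substituting into \eqref{tFF} turns this into a linear condition on $F$, to be checked against the characteristic conditions \eqref{Fi}. Equivalently — and this is the route that parallels the passage from \eqref{Fi} to \eqref{Fi:nxi} — one writes $\nn_x\xi=\n_x\xi+\Phi(x,\xi)$, substitutes $y=\xi$ into \eqref{PhiF}, simplifies with $\f\xi=0$, $\eta(\xi)=1$, $\eta\circ\f=0$ and \eqref{F-prop} to obtain $\Phi(x,\xi)$ explicitly, and then reads off $\nn\xi$ on each class using the table \eqref{Fi:nxi} for $\n\xi$. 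Either way one finds that $\nn\xi=0$ precisely in $\F_1$, $\F_2$, $\F_3$ (where $\n\xi=0$ and the potential term $\Phi(x,\xi)$ also vanishes) and in $\F_9$ (where the non-zero $\n_x\xi$ is exactly cancelled by $\Phi(x,\xi)$); in the remaining basic classes one exhibits an $x$ with $\nn_x\xi\neq0$. The case to stress is $\F_{10}$: there $\n\xi=0$ but $\Phi(x,\xi)\neq0$, so $\nn\xi\neq0$ — this is exactly why the class that occurs here is $\F_9$ and not $\F_{10}$ as in \thmref{thm:D=n}. Since the condition $\nn\xi=0$ is linear in $F$, it holds on a direct sum of basic classes if and only if it holds on each summand, which yields $\F_1\oplus\F_2\oplus\F_3\oplus\F_9$.

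The main obstacle is the bookkeeping in the second step: reducing $\tF(x,\f y,\xi)$ through \eqref{tFF} (equivalently, $\Phi(x,\xi)$ through \eqref{PhiF}) to a manageable form, and then, in the resulting case analysis, checking the exact cancellation $\n_x\xi+\Phi(x,\xi)=0$ in $\F_9$ while exhibiting non-vanishing in $\F_{10}$ and in $\F_4,\dots,\F_8,\F_{11}$. Once the behaviour of $\nn\xi$ on each basic class has been tabulated — the tilde-analogue of \eqref{Fi:nxi}, already announced in Section~2 — the statement follows, mirroring word for word the short proof of \thmref{thm:D=n}.
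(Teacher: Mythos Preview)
Your opening reduction is right and matches the paper: by \eqref{tSvK=n} and \thmref{thm:D-T}, $\tD=\nn$ if and only if $\nn\xi=0$. The paper then argues in one line: it checks $\nn\xi=0$ (equivalently $\tF(\cdot,\cdot,\xi)=0$) against the defining conditions \eqref{Fi} \emph{read with $\tF$ in place of $F$}, thereby obtaining directly the class of $(\M,\f,\xi,\eta,\tg)$. This is precisely the ``tilde-analogue of \eqref{Fi:nxi}'' that you name in your final paragraph and that the paper announces at the end of Section~2.

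The gap is that neither of the two routes you actually describe does this. In route~(a) you substitute \eqref{tFF} to rewrite $\tF(\cdot,\cdot,\xi)=0$ as a condition on $F$ and then check that condition against \eqref{Fi}; in route~(b) you compute $\nn_x\xi=\n_x\xi+\Phi(x,\xi)$ via \eqref{PhiF} and read off the result from \eqref{Fi:nxi}. Both routes therefore locate the condition $\nn\xi=0$ according to the class of $(\M,\f,\xi,\eta,g)$, not the class of $(\M,\f,\xi,\eta,\tg)$. These two classifications are genuinely different: they are related through \eqref{tFF}, and \lemref{lem:U1} records one instance of how the basic classes permute under $g\leftrightarrow\tg$. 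So even if your case analysis ends with the label ``$\F_1\oplus\F_2\oplus\F_3\oplus\F_9$'', it is a statement about the $g$-classification, not the $\tg$-classification the theorem asks for; that the two labels happen to coincide is not explained and cannot be taken for granted. In particular, your sentence ``in $\F_{10}$: there $\n\xi=0$ but $\Phi(x,\xi)\neq0$'' refers to the $g$-class $\F_{10}$, which is not the object in the statement. To close the gap you must either supply a dictionary between the two classifications (extending \lemref{lem:U1}) or, more simply, follow the paper and test $\tF$ directly against the conditions \eqref{Fi} for $\tF$, without passing back to $F$.
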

\begin{proof}
The connection $\tD$ coincides with $\nn$ if and only if $\nn \xi$ vanishes. This condition holds if and only if $\widetilde{F}$ satisfies the conditions of $F$ in \eqref{Fi} for $\F_1\oplus\F_2\oplus\F_3\oplus\F_{9}$.
\end{proof}

Taking into account \eqref{tFF}, we prove immediately the following
\begin{lem}\label{lem:U1}
The manifold $(\M,\f,\xi,\eta,g)$ belongs to the class $\F_1\oplus\F_2\oplus\F_3\oplus\F_{10}$ if and only if the manifold  $(\M,\f,\xi,\eta,\tg)$ belongs to the class $\F_1\oplus\F_2\oplus\F_3\oplus\F_{9}$.
\end{lem}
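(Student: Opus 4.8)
The plan is to reduce the statement, via \thmref{thm:D=n} and \thmref{thm:tD=nn}, to an algebraic equivalence between $\xi$-contraction conditions on $F$ and on $\tF$, and then to read that equivalence off from \eqref{tFF}. By \thmref{thm:D=n}, $(\M,\f,\xi,\eta,g)$ lies in $\F_1\oplus\F_2\oplus\F_3\oplus\F_{10}$ exactly when $\n\xi=0$; using the second line of \eqref{F-prop}, the bijectivity of $\f$ on $\HH$, and the identity $F(x,\xi,\xi)=0$ (obtained by setting $y=z=\xi$ in the first line of \eqref{F-prop}), this holds precisely when $F(x,y,\xi)=F(x,\xi,y)=0$ and $\om=0$ for all $x,y$. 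Dually, by \thmref{thm:tD=nn}, $(\M,\f,\xi,\eta,\tg)$ lies in $\F_1\oplus\F_2\oplus\F_3\oplus\F_9$ exactly when $\tF$ satisfies the conditions of \eqref{Fi} for that direct sum; I would first record that, among the eleven basic classes, $\F_1\oplus\F_2\oplus\F_3\oplus\F_9$ is exactly the set on which
\begin{equation*}
\tF(\xi,y,z)=0,\qquad \tF(x,y,\xi)=-\tF(y,x,\xi)=-\tF(\f x,\f y,\xi)\qquad\text{for all }x,y,z.
\end{equation*}

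For the forward implication I would substitute the vanishing of the above $\xi$-contractions of $F$ into \eqref{tFF}. Each of the three brackets weighted by $\eta(x)$, $\eta(y)$, $\eta(z)$ consists only of terms $F(\cdot,\cdot,\xi)$ and hence vanishes, leaving $2\tF(x,y,z)=F(\f y,z,x)-F(y,\f z,x)+F(\f z,y,x)-F(z,\f y,x)$. Setting $x=\xi$ and using $\f\xi=0$, every surviving term carries $\xi$ in its last slot, so $\tF(\xi,\cdot,\cdot)=0$; setting $z=\xi$ gives $2\tF(x,y,\xi)=-F(\xi,\f y,x)$, and then the first line of \eqref{F-prop} with first argument $\xi$, together with $F(x,y,z)=F(x,z,y)$ and $\om=0$, gives $F(\xi,\f y,x)=-F(\xi,y,\f x)=-F(\xi,\f x,y)$ — precisely the skew-symmetry and the anti-invariance under $\f$ of $\tF(\cdot,\cdot,\xi)$. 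Hence $(\M,\f,\xi,\eta,\tg)\in\F_1\oplus\F_2\oplus\F_3\oplus\F_9$.

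For the converse I would use that $g$ is the associated metric of $\tg$: \eqref{strM} gives $\tg(x,\f y)+\eta(x)\eta(y)=g(x,y)$, so \eqref{tFF} holds verbatim after interchanging $(g,\n,F)$ with $(\tg,\nn,\tF)$. Inserting the two conditions on $\tF$ into this interchanged identity and taking $z=\xi$, the skew-symmetry and the anti-invariance of $\tF(\cdot,\cdot,\xi)$ make the surviving terms cancel in pairs, so $F(x,y,\xi)=0$ for all $x,y$; by \thmref{thm:D=n}, $(\M,\f,\xi,\eta,g)\in\F_1\oplus\F_2\oplus\F_3\oplus\F_{10}$.

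The main obstacle is the claim in the first paragraph: one must check that the $\xi$-contraction descriptions of $\F_1\oplus\F_2\oplus\F_3\oplus\F_9$ and of $\F_1\oplus\F_2\oplus\F_3\oplus\F_{10}$ are sharp, i.e.\ that each of $\F_4,\F_5,\F_6,\F_7,\F_8,\F_{10},\F_{11}$ violates one of the displayed conditions: the classes with symmetric $F(\cdot,\cdot,\xi)$ fail skew-symmetry, $\F_7$ has an $\f$-invariant rather than an $\f$-anti-invariant $F(\cdot,\cdot,\xi)$, and $\F_{10},\F_{11}$ have $F(\xi,\cdot,\cdot)\ne 0$. Once that is established, the manipulations of \eqref{tFF} above are short and mechanical.
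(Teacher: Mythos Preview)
Your proposal is correct and follows exactly the route the paper indicates: the paper's own proof consists solely of the sentence ``Taking into account \eqref{tFF}, we prove immediately the following,'' and your argument is precisely the explicit unpacking of that claim---reducing the two class conditions to the $\xi$-contraction conditions $F(\cdot,\cdot,\xi)=0$ and $\tF(\xi,\cdot,\cdot)=0$, $\tF(x,y,\xi)=-\tF(y,x,\xi)=-\tF(\f x,\f y,\xi)$, and then reading the equivalence off \eqref{tFF} and its role-swapped counterpart. Two cosmetic remarks: your citations of \thmref{thm:D=n} and \thmref{thm:tD=nn} are really appeals to \eqref{Fi:nxi} and the definitions in \eqref{Fi} (the theorems themselves concern $\n^{\parallel}$ and $\tD$, which play no role here), and the ``main obstacle'' paragraph would read better as a short verification rather than a caveat, since the checks you outline against \eqref{Fi} are immediate.
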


Then, \thmref{thm:D=n}, \thmref{thm:tD=nn} and \lemref{lem:U1} imply the following
\begin{thm}\label{thm:D=ntD=nn}
Let $\n^{\parallel}$ and $\tD$ be the Schouten-van Kampen connections associated to $\n$ and $\nn$, respectively, and adapted to the pair $(\HH,\VV)$ on $(\M,\f,\xi,\eta,g,\tg)$. Then the following assertions are equivalent:
\begin{enumerate}
  \item $\n^{\parallel}$ coincides with $\n$;
  \item $\tD$ coincides with $\nn$;
  \item $(\M,\f,\xi,\eta,g)$ belongs to $\F_1\oplus\F_2\oplus\F_3\oplus\F_{10}$;
  \item $(\M,\f,\xi,\eta,\tg)$ belongs to $\F_1\oplus\F_2\oplus\F_3\oplus\F_{9}$.
\end{enumerate}
\end{thm}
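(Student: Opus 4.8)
The plan is to assemble \thmref{thm:D=ntD=nn} directly from the three preceding results, since by this point all the real work has been done. The logical skeleton is a cycle of implications: $(1)\Leftrightarrow(3)$ is precisely \thmref{thm:D=n}; $(2)\Leftrightarrow(4)$ is precisely \thmref{thm:tD=nn}; and $(3)\Leftrightarrow(4)$ is precisely \lemref{lem:U1}. Concatenating these three equivalences closes the loop among all four statements, so the proof is essentially a one-line citation of the chain
$$(1)\ \overset{\thmref{thm:D=n}}{\Longleftrightarrow}\ (3)\ \overset{\lemref{lem:U1}}{\Longleftrightarrow}\ (4)\ \overset{\thmref{thm:tD=nn}}{\Longleftrightarrow}\ (2).$$

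First I would state that by \thmref{thm:D=n} the equality $\n^{\parallel}=\n$ is equivalent to $(\M,\f,\xi,\eta,g)\in\F_1\oplus\F_2\oplus\F_3\oplus\F_{10}$, which is the equivalence $(1)\Leftrightarrow(3)$. Next, by \thmref{thm:tD=nn}, the equality $\tD=\nn$ is equivalent to $(\M,\f,\xi,\eta,\tg)\in\F_1\oplus\F_2\oplus\F_3\oplus\F_{9}$, giving $(2)\Leftrightarrow(4)$. Finally, \lemref{lem:U1} asserts exactly the equivalence $(3)\Leftrightarrow(4)$, namely that $(\M,\f,\xi,\eta,g)\in\F_1\oplus\F_2\oplus\F_3\oplus\F_{10}$ holds if and only if $(\M,\f,\xi,\eta,\tg)\in\F_1\oplus\F_2\oplus\F_3\oplus\F_{9}$. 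Transitivity of ``if and only if'' then yields that all four assertions are mutually equivalent.

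There is no genuine obstacle here; the only thing worth a remark is that the real content is hidden upstream. The substantive step behind \lemref{lem:U1} is the transfer formula \eqref{tFF} between $F$ and $\tF$: one must check that the combination of components appearing in the defining conditions of $\F_1\oplus\F_2\oplus\F_3\oplus\F_{10}$ (characterized via \eqref{Fi:nxi} by $\n\xi=0$) corresponds under \eqref{tFF} to the vanishing of $\nn\xi$, which by \thmref{thm:tD=nn} is the condition for $\F_1\oplus\F_2\oplus\F_3\oplus\F_{9}$ on the $\tg$-side. But since \lemref{lem:U1} is already available to us, the present theorem needs nothing more than the bookkeeping of chaining equivalences, and I would write it in two or three sentences.
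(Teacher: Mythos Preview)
Your proposal is correct and matches the paper's own approach exactly: the paper states the theorem as an immediate consequence of \thmref{thm:D=n}, \thmref{thm:tD=nn} and \lemref{lem:U1}, which is precisely your chain $(1)\Leftrightarrow(3)\Leftrightarrow(4)\Leftrightarrow(2)$.
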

\begin{cor}\label{cor:D=ntD=nn}
Let $\n^{\parallel}$ and $\tD$ be the Schouten-van Kampen connections associated to $\n$ and $\nn$, respectively, and adapted to the pair $(\HH,\VV)$ on $(\M,\f,\xi,\eta,g,\tg)$. If $\tD\equiv\n$ or $\n^{\parallel}\equiv\nn$ then the four connections $\n^{\parallel}$, $\tD$, $\n$ and $\nn$ coincide. The latter coincidences are equivalent to the condition $(\M,\f,\xi,\eta,g)$ and $(\M,\f,\xi,\eta,\tg)$ belong to $\F_0$.
\end{cor}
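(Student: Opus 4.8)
The plan is to show that either of the two hypotheses forces the potential $\Phi$ of $\nn$ with respect to $\n$ to vanish, and then to read off the coincidence of all four connections from the results already established. It is enough to treat the case $\tD\equiv\n$, since the case $\n^{\parallel}\equiv\nn$ follows by interchanging the roles of $g$ and $\tg$, hence of $\n$ and $\nn$ and of $\n^{\parallel}$ and $\tD$; the construction \eqref{SvK} is symmetric under this exchange.

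So assume $\tD\equiv\n$. By \thmref{thm:D-T} we have $\tD\xi=0$, so the hypothesis gives $\n_x\xi=0$ for every $x$; then, since $(\n_x\eta)y=g(\n_x\xi,y)=0$ as well, formula \eqref{SvK=n} shows $\n^{\parallel}\equiv\n$. Thus assertion (1) of \thmref{thm:D=ntD=nn} holds, and by that theorem its equivalent assertion (2) holds too, i.e. $\tD\equiv\nn$. Combining $\tD\equiv\n$ with $\tD\equiv\nn$ yields $\n\equiv\nn$, equivalently $\Phi=0$; by the list of equivalent conditions characterising $\F_0$ recorded just after \eqref{PhiF}, both $(\M,\f,\xi,\eta,g)$ and $(\M,\f,\xi,\eta,\tg)$ belong to $\F_0$. (Alternatively, one can reach $\n\equiv\nn$ at once: $\tD\equiv\n$ makes $\n$ a torsion-free connection that is metric with respect to $\tg$, so by uniqueness of the Levi-Civita connection $\n=\nn$.)

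Conversely, suppose both manifolds are in $\F_0$, i.e. $F=0$, equivalently $\tF=0$, equivalently $\n\equiv\nn$. From \eqref{F-prop}, $(\n_x\eta)y=-F(x,\f y,\xi)=0$, hence $\n_x\xi=0$ and likewise $\nn_x\xi=0$; substituting into \eqref{SvK=n} and \eqref{tSvK=n} gives $\n^{\parallel}\equiv\n$ and $\tD\equiv\nn$, so all four connections $\n^{\parallel}$, $\tD$, $\n$, $\nn$ coincide. Therefore each of the hypotheses $\tD\equiv\n$, $\n^{\parallel}\equiv\nn$ implies membership in $\F_0$, which in turn implies that the four connections coincide, and the coincidence of the four connections is in fact equivalent to $\F_0$, since it entails $\n\equiv\nn$, hence $\Phi=0$.

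I do not expect a genuine obstacle: the only active step is the elementary remark that $\tD\xi=0$ (part of \thmref{thm:D-T}), together with the hypothesis $\tD\equiv\n$, forces $\n\xi=0$ — after which \thmref{thm:D=ntD=nn}, the formulas \eqref{SvK=n}, \eqref{tSvK=n}, \eqref{F-prop} and the characterisation of $\F_0$ do the rest. The one point to keep in mind is to invoke the equivalence $F=0\Leftrightarrow\tF=0$ so that the conclusion is stated correctly for both metric structures simultaneously.
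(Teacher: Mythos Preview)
Your argument is correct. The paper states this result as a corollary without proof, and your proposal supplies exactly the details the paper leaves implicit: you use $\tD\xi=0$ from \thmref{thm:D-T} together with the hypothesis $\tD\equiv\n$ to force $\n\xi=0$, then invoke \thmref{thm:D=ntD=nn} and the $\F_0$ characterisation after \eqref{PhiF}; this is the natural route and matches the paper's implicit reasoning. Your parenthetical alternative (that $\tD\equiv\n$ makes $\n$ torsion-free and $\tg$-metric, hence $\n=\nn$ by uniqueness of the Levi-Civita connection) is a nice shortcut that bypasses part of \thmref{thm:D=ntD=nn}.
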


We obtain the following relation between $\n^{\parallel}$ and $\tD$, using \eqref{tSvK=n} and $\Phi$,
\begin{equation}\label{tD=D}
  \tD_x y = \n^{\parallel}_x y + \Phi(x,y) -\eta(\Phi(x,y))\xi -\eta(y)\Phi(x,\xi).
\end{equation}

The two connections $\tD$ and $\n^{\parallel}$ coincide if and only if $\Phi(x,y)=\eta(\Phi(x,y))\xi +\eta(y)\Phi(x,\xi)$ which is equivalent to $\Phi(x,y)=\eta(\Phi(x,y))\xi +\eta(x)\eta(y)\Phi(\xi,\xi)$ because $\Phi$ is symmetric.
By virtue of \eqref{PhiF} and the latter expression of $\Phi$, we obtain
\begin{equation}\label{F_D=0}
  F(x,y,z)=F(x,y,\xi)\eta(z)+F(x,z,\xi)\eta(y),
\end{equation}
which determines the class $\F_4\oplus\cdots\oplus\F_9\oplus\F_{11}$.
Then, the following assertion is valid.
\begin{thm}\label{thm:tD=D}
The Schouten-van Kampen connections $\tD$ and $\n^{\parallel}$ associated to $\nn$ and $\n$, respectively, and adapt\-ed to the pair  $(\HH,\VV)$
coincide with each other if and only if the manifold belongs to the class $\F_4\oplus\cdots\oplus\F_9\oplus\F_{11}$.
\end{thm}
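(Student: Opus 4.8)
The plan is to obtain the statement from the comparison formula \eqref{tD=D} together with the expression \eqref{PhiF} of the potential $\Phi$ in terms of $F$. By \eqref{tD=D}, the connections $\tD$ and $\n^{\parallel}$ coincide exactly when
\[
\Phi(x,y)=\eta(\Phi(x,y))\,\xi+\eta(y)\,\Phi(x,\xi)
\]
holds for all $x,y$. As already noted before the statement, since $\n$ and $\nn$ are both torsion-free the tensor $\Phi$ is symmetric; interchanging $x$ and $y$ and comparing the $\HH$-components (recall $x^h=\f^2x$, $x^v=\eta(x)\xi$ from \eqref{Xhv}) turns this into the equivalent condition $\Phi(x,y)=\eta(\Phi(x,y))\xi+\eta(x)\eta(y)\Phi(\xi,\xi)$, i.e. $\f^2\Phi(x,y)=\eta(x)\eta(y)\f^2\Phi(\xi,\xi)$. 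So it suffices to show that this reduced form of $\Phi$ is equivalent to $(\M,\f,\xi,\eta,g)$ lying in $\F_4\oplus\cdots\oplus\F_9\oplus\F_{11}$.

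The next step is to convert the condition on $\Phi$ into one on $F$ via \eqref{PhiF}. Writing $\Phi(x,y,z)=g(\Phi(x,y),z)$, the reduced form of $\Phi$ reads $\Phi(x,y,z)=\eta(z)\Phi(x,y,\xi)+\eta(x)\eta(y)\Phi(\xi,\xi,z)$; substituting this on the left of \eqref{PhiF} and simplifying the right with $\f\xi=0$, $\eta\circ\f=0$ and \eqref{strM}, \eqref{F-prop} (separating the $\eta$-free, single-$\eta$ and double-$\eta$ parts) forces
\[
F(x,y,z)=F(x,y,\xi)\,\eta(z)+F(x,z,\xi)\,\eta(y),
\]
which is \eqref{F_D=0}. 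Conversely, inserting \eqref{F_D=0} into \eqref{PhiF} and reducing, one checks that every purely horizontal contribution cancels and $\Phi$ collapses to its reduced form, so by \eqref{tD=D} the two connections coincide. This two-sided substitution into \eqref{PhiF} is the only computational content of the proof, and the bookkeeping of the many $\eta$-terms is where I expect the difficulty to lie; this is the main obstacle.

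It remains to identify \eqref{F_D=0} with the claimed class. Comparing with the characteristic conditions \eqref{Fi}, the leading relation defining each of $\F_6$, $\F_7$, $\F_8$, $\F_9$ is precisely \eqref{F_D=0}, while $\F_4$, $\F_5$ and $\F_{11}$ are particular cases of it (for $\F_{11}$ one uses $F(\xi,\xi,\xi)=0$, which follows from \eqref{F-prop}); conversely each of $\F_1$, $\F_2$, $\F_3$, $\F_{10}$ meets \eqref{F_D=0} only trivially, i.e. in $\F_0$. Hence \eqref{F_D=0} characterizes exactly the class $\F_4\oplus\cdots\oplus\F_9\oplus\F_{11}$ — this may also be quoted directly from the classification in \cite{ManSta} — and combined with the first paragraph this proves the theorem.
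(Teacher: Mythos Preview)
Your proof is correct and follows exactly the paper's own argument: starting from \eqref{tD=D}, reducing the coincidence condition on $\Phi$ using its symmetry, converting it via \eqref{PhiF} into \eqref{F_D=0}, and then identifying this with the class $\F_4\oplus\cdots\oplus\F_9\oplus\F_{11}$. You have actually spelled out the two directions and the class identification in more detail than the paper does, but the route is the same.
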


\subsection{The conditions for natural connections $\n^{\parallel}$ and $\tD$ for $(\f,\xi,\eta,g,\tg)$}

It is known that a connection is called natural for a structure $(\f,\xi,\eta,g,\tg)$ when all of the structure tensors are parallel with respect to this connection. According to \thmref{thm:D-T}, $\n^{\parallel}$ preserves $(\xi,\eta,g)$. However, $\n^{\parallel}$ is not a natural connection for the studied structures, because $\n^{\parallel}\f$ (therefore $\n^{\parallel}\tg$, too) is generally not zero.


\begin{thm}\label{thm:D-nat}
The Schouten-van Kampen connection $\n^{\parallel}$ 
is a natural connection for the structure $(\f,\xi,\eta,g)$ if and only if $(\M,\f,\xi,\eta,g)$ belongs to the class $\F_4\oplus\cdots\oplus\F_9\oplus\F_{11}$.
\end{thm}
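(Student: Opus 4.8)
By \thmref{thm:D-T} the connection $\n^{\parallel}$ already satisfies $\n^{\parallel}\xi=\n^{\parallel}\eta=\n^{\parallel}g=0$, so it is a natural connection for $(\f,\xi,\eta,g)$ exactly when in addition $\n^{\parallel}\f=0$. The whole proof thus reduces to computing the $(1,2)$-tensor $\n^{\parallel}\f$ and determining when it vanishes. The plan is to start from \eqref{SvK=n} and expand $(\n^{\parallel}_x\f)y=\n^{\parallel}_x(\f y)-\f\bigl(\n^{\parallel}_x y\bigr)$. Using $\eta(\f y)=0$ and $\f\xi=0$, the two correction terms of \eqref{SvK=n} simplify and one is left with
\begin{equation*}
(\n^{\parallel}_x\f)y=(\n_x\f)y+(\n_x\eta)(\f y)\,\xi+\eta(y)\,\f(\n_x\xi).
\end{equation*}

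Next I would convert this into a statement about $F$ by pairing with an arbitrary $z$ via $g$; the first summand gives $F(x,y,z)$. To handle the other two I would use \eqref{F-prop}: from $(\n_x\eta)w=g(\n_x\xi,w)=-F(x,\f w,\xi)$, together with $\f^2=I-\eta\otimes\xi$, the self-adjointness of $\f$ with respect to $g$, and the elementary fact $F(x,\xi,\xi)=0$ (obtained by putting $y=z=\xi$ in the first line of \eqref{F-prop}), one gets $(\n_x\eta)(\f y)=-F(x,y,\xi)$ and $g\bigl(\f(\n_x\xi),z\bigr)=g(\n_x\xi,\f z)=-F(x,z,\xi)$. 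Hence $g\bigl((\n^{\parallel}_x\f)y,z\bigr)=F(x,y,z)-F(x,y,\xi)\eta(z)-\eta(y)F(x,z,\xi)$, so $\n^{\parallel}\f=0$ is equivalent to
\begin{equation*}
F(x,y,z)=F(x,y,\xi)\eta(z)+F(x,z,\xi)\eta(y),
\end{equation*}
which is precisely condition \eqref{F_D=0}.

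Finally I would identify the class defined by \eqref{F_D=0} with $\F_4\oplus\cdots\oplus\F_9\oplus\F_{11}$ — the same step already carried out in the paragraph preceding \thmref{thm:tD=D} — by comparing with the defining relations \eqref{Fi}: in $\F_1$, $\F_2$, $\F_3$ and $\F_{10}$ one computes $F(x,y,\xi)=0$ while $F\not\equiv0$ in general, so these classes are excluded, whereas each of $\F_4,\dots,\F_9,\F_{11}$ satisfies \eqref{F_D=0} directly. This gives the claimed equivalence. The only delicate point is the bookkeeping in the middle step: one must make sure the $\xi$-components reduce exactly as stated, with no surviving $\om$- or $\ta$-terms, so that the two extra summands assemble into the right-hand side of \eqref{F_D=0} with the correct signs; everything else is routine.
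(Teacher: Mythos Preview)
Your proof is correct and follows essentially the same route as the paper: both compute $(\n^{\parallel}_x\f)y$ from \eqref{SvK=n} (your expression $(\n_x\f)y+(\n_x\eta)(\f y)\,\xi+\eta(y)\,\f(\n_x\xi)$ is identical to the paper's \eqref{Df}, since $(\n_x\eta)(\f y)=-\eta(\n_x\f y)$), reduce $\n^{\parallel}\f=0$ to condition \eqref{F_D=0}, and then invoke the class identification already established before \thmref{thm:tD=D}. You simply spell out the conversion to $F$ and the check against \eqref{Fi} in more detail than the paper does.
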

\begin{proof} Bearing in mind \eqref{SvK=n}, we obtain the covariant derivative of $\f$ with respect to $\n^{\parallel}$ as follows
\begin{equation}\label{Df}
  (\n^{\parallel}_x\f)y=(\n_x\f)y+\eta(y)\f\n_x\xi-\eta(\n_x\f y)\xi.
\end{equation}
Then, $\n^{\parallel}\f$ vanishes if and only if $(\n_x\f)y=-\eta(y)\f\n_x\xi+\eta(\n_x\f y)\xi$ holds, which is equivalent to \eqref{F_D=0}.
Bearing in mind the proof of \thmref{thm:tD=D}, we find that the class with natural connection $\n^{\parallel}$ is the class in the statement.
\end{proof}

Taking into account \thmref{thm:D=n} and \thmref{thm:D-nat}, we obtain the following
\begin{cor}
The class of all almost paracontact almost paracomplex Riemannian  manifolds can be decomposed orthogonally to the subclass of the manifolds with coinciding connections $\n^{\parallel}$ and $\n$ and the subclass of manifolds with natural $\n^{\parallel}$.
\end{cor}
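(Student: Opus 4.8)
The plan is to deduce the decomposition directly from the two characterizations already in hand, so that the argument reduces to bookkeeping on the index sets of the basic classes. First I would recall from \thmref{thm:D=n} that the manifolds for which $\n^{\parallel}$ coincides with $\n$ are exactly those in the class $\F_1\oplus\F_2\oplus\F_3\oplus\F_{10}$, and from \thmref{thm:D-nat} that the manifolds on which $\n^{\parallel}$ is a natural connection for the structure $(\f,\xi,\eta,g)$ are exactly those in the class $\F_4\oplus\cdots\oplus\F_9\oplus\F_{11}$.

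Next I would invoke the structure of the classification of \cite{ManSta}: the eleven basic classes $\F_1,\dots,\F_{11}$ are introduced as mutually orthogonal and complementary subspaces of the space of $(0,3)$-tensors sharing the symmetries \eqref{F-prop} of $F$, so that the whole class of almost paracontact almost paracomplex Riemannian manifolds corresponds to the orthogonal direct sum $\F_1\oplus\cdots\oplus\F_{11}$. The index sets $\{1,2,3,10\}$ and $\{4,5,6,7,8,9,11\}$ are disjoint and their union is $\{1,2,\dots,11\}$; hence the two subclasses singled out above are orthogonal complements of each other inside the full classification space, which is precisely the asserted orthogonal decomposition of the class of all such manifolds.

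Since both ingredients are already established, there is no real obstacle here; the only points worth flagging are, first, that ``orthogonal'' refers to the invariant inner product underlying the classification and not to a metric on $\M$ itself, and, second, that the two subclasses meet only in $\F_0$ (where $F=0$, so that $\n=\n^{\parallel}$ and it is then automatically a natural connection), which confirms that the decomposition is a genuine direct sum rather than merely a covering by two classes.
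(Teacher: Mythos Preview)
Your proposal is correct and follows essentially the same approach as the paper: the corollary is stated there as an immediate consequence of \thmref{thm:D=n} and \thmref{thm:D-nat}, with no further argument given, and your write-up simply makes explicit the bookkeeping on the index sets and the orthogonality of the basic classes in the classification of \cite{ManSta}. Your remark about the two subclasses meeting in $\F_0$ is a nice clarification but not needed for the statement itself.
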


Taking into account  \eqref{tD=D}, we get the following relation between the covariant derivatives of $\f$ with respect to $\tD$ and $\n^{\parallel}$
\begin{equation}\label{tDfiDfi}
(\tD_x\f)y=(\n^{\parallel}_x\f)y+\Phi(x,\f y)-\f\Phi(x,y)+\eta(y)\f\Phi(x,\xi)-\eta(\Phi(x,\f y))\xi.
\end{equation}

Therefore, we establish that $\tD\f$ and $\n^{\parallel}\f$ coincide if and only if the condition
\[
\Phi(x,\f^2 y,\f^2 z)=-\Phi(x,\f y,\f z)
 \]
 holds, which is fulfilled only when $(\M,\f,\xi,\eta,g)$ is in the class $\F_3\oplus\F_4\oplus\F_5\oplus\F_6\oplus\F_7\oplus\F_{11}$.
Similarly, we establish that $(\M,\f,\xi,\eta,\tg)$ belongs to the same class and therefore we proved the following
\begin{thm}\label{thm:tDfi=Dfi}
The covariant derivatives of $\f$ with respect to the Schouten-van Kampen connections $\n^{\parallel}$ and $\tD$ %
coincide if and only if both of the manifolds $(\M,\f,\xi,\allowbreak{}\eta,\allowbreak{}g)$ and $(\M,\f,\xi,\eta,\tg)$  belong to the class $\F_3\oplus\F_4\oplus\F_5\oplus\F_6\oplus\F_7\oplus\F_{11}$.
\end{thm}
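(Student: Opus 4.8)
The plan is to reduce the statement to the pointwise condition on the potential $\Phi$ that is already hinted at in the text, namely $\Phi(x,\f^2 y,\f^2 z)=-\Phi(x,\f y,\f z)$, and then translate this into the classification of $F$. First I would start from the comparison formula \eqref{tDfiDfi}, which expresses $(\tD_x\f)y-(\n^{\parallel}_x\f)y$ purely in terms of $\Phi$. Setting the difference equal to zero gives the tensorial identity
\[
\Phi(x,\f y)-\f\Phi(x,y)+\eta(y)\f\Phi(x,\xi)-\eta(\Phi(x,\f y))\xi=0.
\]
I would then analyse this identity by splitting $y$ (and the third, metric slot) into its horizontal and vertical parts via \eqref{Xhv}. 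On the vertical part the identity is automatically satisfied, since $\Phi(x,\f\xi)=0$, $\f\Phi(x,\xi)$ and $\eta(y)\f\Phi(x,\xi)$ cancel, so the only content is on the horizontal part, where it becomes $\Phi(x,\f y^h)=\f\Phi(x,y^h)$ together with $\eta(\Phi(x,\f y^h))=0$. Using that $\Phi$ is symmetric and that $\f$ is an almost paracomplex structure on $\HH$, this horizontal condition is equivalent to $\Phi(x,\f^2 y,\f^2 z)=-\Phi(x,\f y,\f z)$, which is exactly the reduced condition displayed in the paragraph preceding the theorem.

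Next I would substitute the explicit expression \eqref{PhiF} of $\Phi$ in terms of $F$ into this reduced condition. Because the condition only constrains the purely horizontal part of $\Phi$ (the slots hit by $\f$), the terms in \eqref{PhiF} carrying an explicit factor $\eta(x)$, $\eta(y)$ or $\eta(z)$ drop out after inserting $\f^2 y,\f^2 z$ or $\f y,\f z$, and one is left comparing $F(x,\f^2y,\f^3 z)$-type terms with $F(x,\f y,\f^2 z)$-type terms; using $\f^3=\f$ on $\HH$ and the symmetries \eqref{F-prop}, the identity $\Phi(x,\f^2y,\f^2z)+\Phi(x,\f y,\f z)=0$ collapses to a linear relation among components of $F$ on $\HH$. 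Matching this against the list \eqref{Fi}, the classes whose defining tensor satisfies the relation are precisely $\F_3,\F_4,\F_5,\F_6,\F_7,\F_{11}$ — the excluded classes $\F_1,\F_2,\F_8,\F_9,\F_{10}$ each contribute a term that violates it (for $\F_1,\F_2$ the $\ta,\ta^*$-type totally horizontal parts obstruct; for $\F_8,\F_9$ it is the sign $-F(\f x,\f y,\xi)$ in their defining relations; $\F_{10}$ is obstructed through its horizontal trace). This gives the class $\F_3\oplus\F_4\oplus\F_5\oplus\F_6\oplus\F_7\oplus\F_{11}$ for $(\M,\f,\xi,\eta,g)$.

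For the $\tg$-statement I would argue by symmetry rather than repeating the computation. The roles of $\n$ and $\nn$ (equivalently of $F$ and $\tF$) can be interchanged in the definitions \eqref{SvK} of the two Schouten-van Kampen connections, so the condition ``$\n^{\parallel}\f=\tD\f$'' is manifestly symmetric in $g$ and $\tg$; hence the same reduced condition, now written with $-\Phi$ (the potential of $\n$ with respect to $\nn$) and $\tF$ in place of $\Phi$ and $F$, must hold, placing $(\M,\f,\xi,\eta,\tg)$ in the same list of classes. Alternatively, and perhaps more cleanly, one invokes \eqref{tFF} together with the way the defining conditions \eqref{Fi} transform under $g\leftrightarrow\tg$ (the same device used in \lemref{lem:U1} and \thmref{thm:tD=nn}) to see directly that $F$ lies in $\F_3\oplus\F_4\oplus\F_5\oplus\F_6\oplus\F_7\oplus\F_{11}$ if and only if $\tF$ does. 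Combining the two directions yields the stated equivalence.

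The main obstacle is the bookkeeping in the second step: one must be careful that inserting $\f^2$ into the slots of \eqref{PhiF} really does kill every $\eta$-term and that no cross term survives, and then one must correctly identify which of the eleven model tensors in \eqref{Fi} satisfy the resulting horizontal relation — in particular distinguishing $\F_6$ and $\F_7$ (which pass) from $\F_8$ and $\F_9$ (which fail) hinges entirely on the sign of the $F(\f x,\f y,\xi)$ term in their defining identities, so that sign must be tracked with care. Everything else is routine substitution using \eqref{strM} and \eqref{F-prop}.
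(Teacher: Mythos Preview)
Your plan matches the paper's own argument almost step for step: start from \eqref{tDfiDfi}, reduce the vanishing of the difference to the condition $\Phi(x,\f^2 y,\f^2 z)=-\Phi(x,\f y,\f z)$, feed in \eqref{PhiF}, and then read off the classes from \eqref{Fi}; the $\tg$-side is handled ``similarly'' in the paper, exactly as you propose via the symmetry between $F$ and $\tF$.

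One concrete slip in your bookkeeping: when you substitute $\f^2 y,\f^2 z$ (or $\f y,\f z$) into \eqref{PhiF}, only the $\eta(y)$ and $\eta(z)$ terms are killed, not the $\eta(x)$ term --- the first slot of $\Phi$ is still $x$. That $\eta(x)$ contribution does not simply ``drop out''; it survives in the sum $\Phi(x,\f^2 y,\f^2 z)+\Phi(x,\f y,\f z)$ and is precisely what detects $\F_{10}$ (whose $F$ is entirely of the form $-\eta(x)F(\xi,\f y,\f z)$), so your stated reason for excluding $\F_{10}$ (``horizontal trace'') is off. This is exactly the careful bookkeeping you already flag as the main obstacle; once you retain the $\eta(x)$ piece, the class identification goes through as claimed.
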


Bearing in mind \eqref{PhiF}, \eqref{Df} and \eqref{tDfiDfi}, we obtain that $\tD\f=0$ is equivalent to
\[
F(\f y,\f z,x)+F(\f^2 y,\f^2 z,x)-F(\f z,\f y,x)-F(\f^2 z,\f^2 y,x)=0.
\]
Then, the latter equality and \eqref{Fi} imply the truthfulness of the following

\begin{thm}\label{thm:tD-nat}
The Schouten-van Kampen connection $\tD$ 
is a natural connection for the structure $(\f,\xi,\eta,\tg)$ if and only if $(\M,\f,\xi,\eta,\tg)$ belongs to the class $\F_1\oplus\F_2\oplus\F_4\oplus\F_5\oplus\F_6\oplus\F_7\oplus\F_{11}$.
\end{thm}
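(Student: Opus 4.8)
The plan is to reduce the statement to the vanishing of $\tD\f$, exactly as in \thmref{thm:D-nat}, and then read the class off the characteristic conditions. First I would observe that, since $\tD$ is the Schouten--van Kampen connection associated with $\nn$, the part of \thmref{thm:D-T} concerning $\tD$ already gives $\tD\xi=\tD\eta=\tD\tg=0$; hence $\tD$ is a natural connection for $(\f,\xi,\eta,\tg)$ precisely when, in addition, $\tD\f=0$. So the statement reduces to locating the manifolds on which $\tD\f$ vanishes, and by the equivalence established immediately before the theorem this is exactly the manifolds on which
\[
F(\f y,\f z,x)+F(\f^2 y,\f^2 z,x)-F(\f z,\f y,x)-F(\f^2 z,\f^2 y,x)=0
\]
holds for all $x,y,z$; I will write $(\ast)$ for this identity.

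Next I would decode $(\ast)$ against the characteristic conditions \eqref{Fi}. Since the left--hand side of $(\ast)$ is linear in $F$ and the condition is invariant under the structure group, the tensors satisfying $(\ast)$ form a direct sum of basic classes, so it is enough to substitute the defining expression of each $\F_i$ into $(\ast)$ and check whether it holds identically. The tools are the algebraic relations \eqref{strM} --- in particular $\f\xi=0$, $\eta\circ\f=0$, $\f^2=I-\eta\otimes\xi$ (hence $\f^3=\f$ and $\eta(\f^2\cdot)=0$), and the symmetry $g(\f x,y)=g(x,\f y)$ --- together with $F(x,y,z)=F(x,z,y)$ and the identities \eqref{F-prop}. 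For the classes whose characteristic tensor carries an $\eta$ in the first slot, or in the last two slots simultaneously (this covers $\F_{11}$, $\F_4$, $\F_5$, and, via the stated symmetries of $F(\f\,\cdot,\f\,\cdot,\xi)$, also $\F_6$ and $\F_7$), the substitutions $y\mapsto\f y$ and $y\mapsto\f^2 y$ kill the offending terms, while the surviving $\f^2$--terms reproduce the $\f$--terms and cancel under the $y\leftrightarrow z$ alternation, so $(\ast)$ holds; a short computation with the Lee form $\theta$ and with $\f^2,\f^3$ does the same for $\F_1$ and $\F_2$. For the remaining basic classes I would exhibit one triple $(x,y,z)$ for which the alternation in $(\ast)$ does not vanish, showing that $(\ast)$ fails there. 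Finally I would transfer the conclusion to the $\tg$--side --- by re--expressing $(\ast)$ through $\tF$ via \eqref{tFF}, which is the same correspondence between the two metrics already used in \lemref{lem:U1} and \thmref{thm:D=ntD=nn} --- and read off the class $\F_1\oplus\F_2\oplus\F_4\oplus\F_5\oplus\F_6\oplus\F_7\oplus\F_{11}$.

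The main obstacle I expect is the case analysis together with the change of metric. On the one hand, telling $\F_2$ (which satisfies $(\ast)$) apart from $\F_3$ (which does not) is delicate, since the two classes differ only by a cyclic--sum condition on $F(x,y,z)$ and by $\theta=0$, so one must use \eqref{F-prop} carefully to see why the $\f$-- and $\f^2$--substitutions cease to cancel in $\F_3$; the $\theta$--terms in $\F_1$ need comparable care. On the other hand, one must make sure that the basic class obtained for $(\ast)$ on the $g$--side is matched, through \eqref{tFF}, with the correctly indexed class on the $\tg$--side appearing in the statement. Everything else is a routine substitution using \eqref{strM}, \eqref{F-prop} and \eqref{PhiF}.
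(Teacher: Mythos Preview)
Your plan is essentially the paper's: the proof there is the two-line sketch ``derive the displayed identity $(\ast)$ from \eqref{PhiF}, \eqref{Df}, \eqref{tDfiDfi}, then read off the class using \eqref{Fi}'', and you have spelled out exactly these steps together with the $g\leftrightarrow\tg$ translation that the paper leaves implicit.

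One caution for when you carry out the case analysis. Your own criterion ``$\eta$ in the first slot'' applies equally to $\F_{10}$ --- indeed all four terms of $(\ast)$ vanish there since $\eta\circ\f=\eta\circ\f^2=0$ --- so you will \emph{not} be able to ``exhibit one triple $(x,y,z)$ for which the alternation in $(\ast)$ does not vanish'' in that class. In other words, the $g$-side class cut out by $(\ast)$ is not yet the list $\F_1\oplus\F_2\oplus\F_4\oplus\F_5\oplus\F_6\oplus\F_7\oplus\F_{11}$; that list appears only after the re-indexing under \eqref{tFF}, in the spirit of \lemref{lem:U1}. You have correctly flagged this transfer as the main obstacle, but be aware it is doing genuine work, not just bookkeeping --- your case-by-case paragraph already jumps to the $\tg$-indexing and so misplaces $\F_{10}$.
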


Combining \thmref{thm:D-nat}, \thmref{thm:tDfi=Dfi} and \thmref{thm:tD-nat}, we get the validity of the following

\begin{thm}\label{thm:DtD-nat}
The Schouten-van Kampen connections $\n^{\parallel}$ and $\tD$ 
are natural connections on $(\M,\f,\xi,\eta,g,\tg)$ if and only if
$(\M,\f,\xi,\eta,g)$ and $(\M,\f,\xi,\eta,\tg)$ belong to the class $\F_4\oplus\F_5\oplus\F_6\oplus\F_7\oplus\F_{11}$.
\end{thm}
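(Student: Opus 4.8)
The plan is to combine the three already-established natural-connection criteria directly, since \thmref{thm:DtD-nat} is a pure set-theoretic intersection of the conditions appearing in \thmref{thm:D-nat}, \thmref{thm:tDfi=Dfi} and \thmref{thm:tD-nat}. By \thmref{thm:D-nat}, $\n^{\parallel}$ is natural for $(\f,\xi,\eta,g)$ exactly when $(\M,\f,\xi,\eta,g)\in\F_4\oplus\cdots\oplus\F_9\oplus\F_{11}$. By \thmref{thm:tD-nat}, $\tD$ is natural for $(\f,\xi,\eta,\tg)$ exactly when $(\M,\f,\xi,\eta,\tg)\in\F_1\oplus\F_2\oplus\F_4\oplus\F_5\oplus\F_6\oplus\F_7\oplus\F_{11}$. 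Thus, requiring both connections to be natural forces each of these two membership conditions simultaneously.

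Next I would observe that \thmref{thm:D-nat} already records $\n^{\parallel}\f=0$ as equivalent to \eqref{F_D=0}, i.e. to the manifold lying in $\F_4\oplus\cdots\oplus\F_9\oplus\F_{11}$; and the proof of \thmref{thm:tDfi=Dfi} shows that $(\M,\f,\xi,\eta,g)$ lies in $\F_3\oplus\F_4\oplus\F_5\oplus\F_6\oplus\F_7\oplus\F_{11}$ precisely when $(\M,\f,\xi,\eta,\tg)$ does too, and that this is exactly the condition $\n^{\parallel}\f=\tD\f$. Since $\n^{\parallel}$ and $\tD$ both already preserve $\xi$, $\eta$ and the respective metrics (\thmref{thm:D-T}), naturalness of each reduces solely to the vanishing of the corresponding covariant derivative of $\f$. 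Hence $\n^{\parallel}$ and $\tD$ are both natural iff $\n^{\parallel}\f=0$ and $\tD\f=0$, and the latter pair of conditions clearly implies $\n^{\parallel}\f=\tD\f$, so \thmref{thm:tDfi=Dfi} applies and both manifolds lie in $\F_3\oplus\F_4\oplus\F_5\oplus\F_6\oplus\F_7\oplus\F_{11}$.

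The remaining step is the intersection computation. Intersecting the class from \thmref{thm:D-nat}, namely $\F_4\oplus\cdots\oplus\F_9\oplus\F_{11}$, with the class from \thmref{thm:tDfi=Dfi}, namely $\F_3\oplus\F_4\oplus\F_5\oplus\F_6\oplus\F_7\oplus\F_{11}$, kills the summands $\F_3$, $\F_8$, $\F_9$ and leaves $\F_4\oplus\F_5\oplus\F_6\oplus\F_7\oplus\F_{11}$. One then checks consistency with \thmref{thm:tD-nat}: the class $\F_1\oplus\F_2\oplus\F_4\oplus\F_5\oplus\F_6\oplus\F_7\oplus\F_{11}$ intersected with $\F_4\oplus\F_5\oplus\F_6\oplus\F_7\oplus\F_{11}$ returns $\F_4\oplus\F_5\oplus\F_6\oplus\F_7\oplus\F_{11}$ again, so no further summands are removed. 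This gives the asserted class.

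I do not anticipate a genuine obstacle here; the content is entirely in the three cited theorems, and the only thing to be careful about is bookkeeping of the direct-sum decomposition — in particular making sure that the equivalences in \thmref{thm:D-nat}, \thmref{thm:tDfi=Dfi} and \thmref{thm:tD-nat} are applied to the correct metric ($g$ versus $\tg$) and that one does not accidentally omit the $\F_{11}$ summand, which survives all three intersections. If a reader wanted more detail, one could unwind everything through \eqref{Df}, \eqref{tDfiDfi} and \eqref{PhiF} to a single tensorial identity on $F$ and read off the class from \eqref{Fi} directly, but invoking the prior theorems is cleaner and is the route I would take.
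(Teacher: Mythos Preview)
Your approach is correct and matches the paper's: the paper simply asserts that \thmref{thm:DtD-nat} follows by ``combining \thmref{thm:D-nat}, \thmref{thm:tDfi=Dfi} and \thmref{thm:tD-nat}'', and you have spelled out exactly that intersection argument, including the key observation that $\n^{\parallel}\f=0=\tD\f$ forces $\n^{\parallel}\f=\tD\f$ so that \thmref{thm:tDfi=Dfi} may be invoked to link the classes of $(\M,\f,\xi,\eta,g)$ and $(\M,\f,\xi,\eta,\tg)$. The only thing you might add explicitly is the (trivial) converse direction, namely that the target class is contained in each of the classes from \thmref{thm:D-nat} and \thmref{thm:tD-nat}.
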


\section{Torsion properties 
of the pair of connections $\n^{\parallel}$ and $\tD$}

Since $g(\xi,\xi)=1$ implies $g(\n_x\xi,\xi)=0$, it follows that $\n_x\xi\in\HH$.
The shape operator $S:\HH\mapsto\HH$ for $g$ is defined as usually by $S(x)=-\n_x\xi$.

Then, using the relations between $T^{\parallel}$, $Q^{\parallel}$ and $S$ given in \eqref{Q}, \eqref{T}, \eqref{TQ}, \eqref{QT}, we have that the properties of the torsion, the potential and the shape operator for $\n^{\parallel}$ are related.

Horizontal and vertical components of $Q^{\parallel}$ and $T^{\parallel}$ given in \eqref{Q} and \eqref{T}, respectively, are the following
\begin{equation}\label{QTShv}
\begin{array}{ll}
Q^{\parallel h}=S\otimes\eta,\qquad & Q^{\parallel v}=-S^{\flat}\otimes\xi,
\\
T^{\parallel h}=-\eta\wedge S,\qquad & T^{\parallel v}=-2\Alt(S^{\flat})\otimes\xi,
\end{array}
\end{equation}
where $S^{\flat}(x,y)=g(S(x),y)$, i.e.  $S^{\flat}=-\n\eta$, whereas $\wedge$ and $\Alt$ denote the exterior product and the alternation, respectively.

Using the vertical components of $Q^{\parallel}$ and $T^{\parallel}$ from \eqref{QTShv}, we obtain immediately
\begin{thm}\label{thm:equiv1}
The following properties are equivalent:
\begin{enumerate}
  \item
        $\n\eta$ is symmetric
  \item
				$\eta$ is closed, i.e. $\D\eta=0$
  \item
			  $Q^{\parallel v}$ is symmetric
  \item
				$T^{\parallel v}$ vanishes
  \item
				$S$ is self-adjoint regarding $g$
  \item
				$S^{\flat}$ is symmetric
  \item
				$\M\in\F_1\oplus\F_2\oplus\F_3\oplus\F_4\oplus\F_5\oplus\F_6\oplus\F_9\oplus\F_{10}$.
\end{enumerate}
\end{thm}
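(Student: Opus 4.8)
The plan is to establish a cycle of implications linking the seven conditions, grouping them by the underlying object: the tensor $\n\eta$ (equivalently $S^{\flat}$), its antisymmetric part $\D\eta$, and the class membership. First I would observe that $S^{\flat}=-\n\eta$ by the definition $S(x)=-\n_x\xi$ together with $(\n_x\eta)(y)=g(\n_x\xi,y)$ from \eqref{F-prop}; this makes (1), (5), (6) literally the same statement, since $S$ being self-adjoint with respect to $g$ means $g(S(x),y)=g(x,S(y))$, i.e. $S^{\flat}$ symmetric, i.e. $\n\eta$ symmetric. Next, the classical identity $\D\eta(x,y)=(\n_x\eta)(y)-(\n_y\eta)(x)$ (valid because $\n$ is torsion-free) gives the equivalence of (1) and (2): $\n\eta$ symmetric iff $\D\eta=0$.

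The second block uses the explicit formulas \eqref{QTShv}. From $Q^{\parallel v}=-S^{\flat}\otimes\xi$ we get that $Q^{\parallel v}$ is symmetric iff $S^{\flat}$ is symmetric, giving (3)$\Leftrightarrow$(6). From $T^{\parallel v}=-2\Alt(S^{\flat})\otimes\xi$ we get that $T^{\parallel v}$ vanishes iff the alternation of $S^{\flat}$ vanishes, i.e. iff $S^{\flat}$ is symmetric, giving (4)$\Leftrightarrow$(6). (Equivalently one can route (4) through (2) since $\D\eta(x,y)\,\xi$ is exactly the $\xi$-component of $T^{\parallel}$ in \eqref{T}.) At this point (1)--(6) are all shown equivalent.

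The remaining and genuinely substantive step is the equivalence with (7). Here I would use \eqref{F-prop}, namely $(\n_x\eta)y=-F(x,\f y,\xi)$, to translate the symmetry condition $(\n_x\eta)y=(\n_y\eta)x$ into the pointwise condition on $F$:
\[
F(x,\f y,\xi)=F(y,\f x,\xi)\quad\text{for all }x,y.
\]
Replacing $y$ by $\f y$ (and using $\f^2 y=y-\eta(y)\xi$ together with $F(x,\xi,\xi)=$ the $\om$ terms) this becomes a relation of the form $F(x,y,\xi)=F(\f y,\f x,\xi)+(\text{$\eta$-terms})$, which is precisely the type of symmetry that separates the eleven basic classes according to the third columns of \eqref{Fi}. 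The work is then a bookkeeping check, class by class, of which of $\F_1,\dots,\F_{11}$ satisfy $F(x,\f y,\xi)=F(y,\f x,\xi)$ identically: classes $\F_1,\F_2,\F_3$ have $F(x,\xi,\cdot)=0$ and $\n\xi=0$ by \eqref{Fi:nxi} so they trivially qualify; for $\F_4,\F_5$ one reads off $\n\xi$ proportional to $\f$ or $\f^2$, both of which are $g$-self-adjoint; $\F_6$ has $g(\n_x\xi,y)=g(\n_{\f x}\xi,\f y)$ symmetric in $x,y$, while $\F_7$ has the antisymmetric version, so $\F_6$ qualifies and $\F_7$ does not; similarly $\F_8,\F_9$ are distinguished by the sign in $g(\n_x\xi,y)=\pm g(\n_{\f y}\xi,\f x)$, and checking against self-adjointness shows $\F_9$ qualifies but $\F_8$ does not; $\F_{10}$ has $\n\xi=0$; and $\F_{11}$ has $\n\xi=\eta\otimes\f\om^{\sharp}$, which vanishes on $\HH$, so $S^{\flat}$ vanishes on $\HH$ and is trivially symmetric. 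Collecting the qualifying classes gives exactly $\F_1\oplus\F_2\oplus\F_3\oplus\F_4\oplus\F_5\oplus\F_6\oplus\F_9\oplus\F_{10}$, and since the condition is linear in $F$ it passes to direct sums, completing the equivalence.

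The main obstacle I anticipate is not any single hard argument but the $\F_7,\F_8,\F_9$ discrimination: one must be careful that the defining conditions in \eqref{Fi} are stated for $F(x,y,\xi)$ with $\f$ applied to \emph{both} slots, whereas the self-adjointness of $S$ concerns $F(x,\f y,\xi)$ with $\f$ in one slot, so the sign chase must correctly account for the substitution $y\mapsto\f y$ and the resulting $\f^2$. I would double-check this against \eqref{Fi:nxi}, where the behaviour of $\n\xi$ is already recorded class by class, to make sure the symmetry/antisymmetry pattern of $g(\n_x\xi,y)$ matches.
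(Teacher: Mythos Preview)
Your strategy is sound and is exactly what the paper does, only written out in full: the paper simply says the theorem follows ``immediately'' from the vertical components in \eqref{QTShv}, leaving both the equivalence of (1)--(6) and the class identification implicit, while you spell these out.

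There is, however, a genuine slip in your $\F_{11}$ paragraph. From \eqref{Fi:nxi} you correctly note $\n_x\xi=\eta(x)\,\f\om^{\sharp}$, hence $S^{\flat}(x,y)=-\eta(x)\,\om(\f y)$. This does vanish when both arguments lie in $\HH$, but that is not enough for symmetry on $T\M$: taking $x=\xi$ and $y\in\HH$ gives $S^{\flat}(\xi,y)=-\om(\f y)$, whereas $S^{\flat}(y,\xi)=-g(\n_y\xi,\xi)=0$ always. Since $\om\circ\f$ is generically nonzero in $\F_{11}$, the shape operator is \emph{not} self-adjoint there, and $\F_{11}$ must be excluded --- which is in fact what the theorem says and what your final list records. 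So your conclusion is right, but the sentence ``$S^{\flat}$ vanishes on $\HH$ and is trivially symmetric'' is the wrong justification; replace it with the observation that $S^{\flat}(\xi,\cdot)\neq 0=S^{\flat}(\cdot,\xi)$ on $\HH$, so $\F_{11}$ fails the symmetry test.

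A minor remark on the $\F_7,\F_8,\F_9$ check you flagged: the cleanest route is exactly the one you mention at the end, namely to read off the symmetry/antisymmetry of $g(\n_x\xi,y)$ directly from \eqref{Fi:nxi} rather than from \eqref{Fi}. For instance, in $\F_8$ one has $F(x,y,\xi)=F(y,x,\xi)=-F(\f x,\f y,\xi)$, so $F(x,\f y,\xi)=F(\f y,x,\xi)=-F(\f^2 y,\f x,\xi)=-F(y,\f x,\xi)$ on $\HH$, giving antisymmetric $\n\eta$; the analogous computation in $\F_9$ flips the sign and yields symmetry. This confirms your anticipated outcome.
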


\begin{thm}\label{thm:equiv2}
The following properties are equivalent:
\begin{enumerate}
  \item
      $\n\eta$ is skew-symmetric
   \item
					$\xi$ is Killing with respect to $g$, i.e. $\LL_{\xi}g=0$
   \item
			   $Q^{\parallel v}$ is skew-symmetric
   \item
			$S$ is anti-self-adjoint regarding $g$
   \item
			$S^{\flat}$ is skew-symmetric
   \item
			$\M\in\F_1\oplus\F_2\oplus\F_3\oplus\F_7\oplus\F_8\oplus\F_{10}$.
\end{enumerate}
\end{thm}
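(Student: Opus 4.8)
The plan is to mirror the proof strategy of \thmref{thm:equiv1} but with the skew-symmetric condition replacing the symmetric one. The logical backbone is that all six conditions (1)--(5) are various translations of ``the symmetric part of $\n\eta$ vanishes'', and condition (6) is the classification-theoretic consequence.

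\begin{proof}
The equivalences (1) $\Leftrightarrow$ (3) $\Leftrightarrow$ (4) $\Leftrightarrow$ (5) are purely algebraic. By \eqref{QTShv}, $Q^{\parallel v}=-S^{\flat}\otimes\xi$, so $Q^{\parallel v}$ is skew-symmetric if and only if $S^{\flat}$ is skew-symmetric; this gives (3) $\Leftrightarrow$ (5). Since $S^{\flat}=-\n\eta$, skew-symmetry of $S^{\flat}$ is equivalent to skew-symmetry of $\n\eta$, which is (1) $\Leftrightarrow$ (5). Finally, $S^{\flat}(x,y)=g(S(x),y)$, so $S^{\flat}$ being skew-symmetric means $g(S(x),y)=-g(S(y),x)=-g(x,S(y))$ for all $x,y\in\HH$, i.e. $S$ is anti-self-adjoint with respect to $g$; this is (4) $\Leftrightarrow$ (5).

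For (1) $\Leftrightarrow$ (2): recall $(\n_x\eta)y=g(\n_x\xi,y)$, so
\[
(\LL_{\xi}g)(x,y)=g(\n_x\xi,y)+g(x,\n_y\xi)=(\n_x\eta)y+(\n_y\eta)x,
\]
which is precisely twice the symmetric part of $\n\eta$. Hence $\LL_{\xi}g=0$ exactly when $\n\eta$ is skew-symmetric, so $\xi$ is Killing with respect to $g$ iff (1) holds.

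It remains to identify the class in (6). Using the second identity in \eqref{F-prop}, $(\n_x\eta)y=-F(x,\f y,\xi)$, so the symmetric part of $\n\eta$ vanishes precisely when
\[
F(x,\f y,\xi)+F(y,\f x,\xi)=0
\]
for all $x,y$; equivalently (replacing $y$ by $\f y$ and using \eqref{strM}, taking care of the $\eta(y)$ terms which only affect the $\xi$-directions that do not contribute here) $F(x,y,\xi)=-F(y,x,\xi)$ on $\HH$. Comparing with the characteristic conditions \eqref{Fi}, one checks case by case which basic classes satisfy this antisymmetry of $F(\cdot,\cdot,\xi)$: in $\F_1$, $\F_2$, $\F_3$ and $\F_{10}$ one has $\n\xi=0$ by \eqref{Fi:nxi}, hence $\n\eta=0$ is trivially skew; in $\F_7$ the defining relation gives $F(x,y,\xi)=-F(y,x,\xi)$ directly; in $\F_8$ the relation $F(x,y,\xi)=F(y,x,\xi)$ forces the symmetric part to vanish only when combined with $F(x,y,\xi)=-F(\f x,\f y,\xi)$ --- here one must verify that the $\HH$-restriction of $\n\eta$ is skew, which follows because for $\F_8$ the $\xi$-component of $\n\xi$ is zero and $g(\n_x\xi,y)=-g(\n_{\f x}\xi,\f y)=g(\n_y\xi,x)$ wait; more carefully, from \eqref{Fi:nxi} for $\F_8$, $g(\n_x\xi,y)=-g(\n_{\f x}\xi,\f y)$, and applying it twice shows the skew part survives. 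The remaining classes $\F_4$, $\F_5$, $\F_6$, $\F_9$, $\F_{11}$ are excluded because their $\n\xi$ (computed in \eqref{Fi:nxi}) has a nonzero symmetric part; e.g. $\F_4$ gives $\n\xi=\frac{1}{2n}\Div^*(\eta)\,\f$ with $g(\f x,y)$ symmetric in the sense needed, so $\n\eta$ is symmetric there, not skew (except in $\F_0$). Assembling the surviving classes yields $\F_1\oplus\F_2\oplus\F_3\oplus\F_7\oplus\F_8\oplus\F_{10}$, and since each of the conditions (1)--(5) is equivalent to the vanishing of the symmetric part of $\n\eta$, all seven statements are equivalent.
\end{proof}

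The main obstacle is the bookkeeping in the last step: one must carefully handle the splitting of $F(x,y,z)$ into its $\HH$-part and the $\eta(y)$-, $\eta(z)$-terms, and then check all eleven basic classes against the antisymmetry condition $F(x,y,\xi)=-F(y,x,\xi)$ using \eqref{Fi} and \eqref{Fi:nxi}. The classes $\F_7$, $\F_8$ require attention because their defining symmetry relations for $F(\cdot,\cdot,\xi)$ interact with the $\f$-twisted relations, and one should double-check whether $\F_8$ genuinely belongs (by \eqref{Fi:nxi}, $g(\n_x\xi,y)=g(\n_{\f y}\xi,\f x)=-g(\n_{\f x}\xi,\f y)$ for $\F_8$, and iterating the $\f$-relation confirms skew-symmetry on $\HH$). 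Everything else is the dual of \thmref{thm:equiv1}.
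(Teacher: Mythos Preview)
Your equivalences (1)--(5) are correct and match the paper's approach (the paper states them as immediate consequences of \eqref{QTShv}). The problem lies entirely in your derivation of (6).

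The substitution you make is wrong. From $(\n_x\eta)y=-F(x,\f y,\xi)$, skew-symmetry of $\n\eta$ reads
\[
F(x,\f y,\xi)+F(y,\f x,\xi)=0.
\]
Replacing $y$ by $\f y$ (on $\HH$) gives $F(x,y,\xi)=-F(\f y,\f x,\xi)$, \emph{not} $F(x,y,\xi)=-F(y,x,\xi)$ as you claim. This matters: your stated condition is precisely the first defining relation of $\F_7$ \emph{and} of $\F_9$, and it is violated by $\F_8$ (where $F(x,y,\xi)=F(y,x,\xi)$). So your intermediate criterion would select $\F_7$ and $\F_9$ and reject $\F_8$, the opposite of what the theorem asserts. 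The ``wait'' you wrote in the $\F_8$ paragraph is the symptom of this inconsistency; you are trying to force the right answer out of the wrong condition.

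The clean route, which is what the paper intends, is to bypass $F$ altogether and test $g(\n_x\xi,y)+g(\n_y\xi,x)=0$ directly against the list \eqref{Fi:nxi}. For $\F_6$--$\F_9$ this is a two-line computation: swap $x\leftrightarrow y$ in the first equality of each line and compare with the second. For instance, in $\F_8$ one has $g(\n_x\xi,y)=g(\n_{\f y}\xi,\f x)$, hence $g(\n_y\xi,x)=g(\n_{\f x}\xi,\f y)$; combining with the second relation $g(\n_x\xi,y)=-g(\n_{\f x}\xi,\f y)$ gives $g(\n_x\xi,y)=-g(\n_y\xi,x)$ as required. The same manipulation for $\F_9$ yields $g(\n_x\xi,y)=+g(\n_y\xi,x)$, excluding it. Once you fix the criterion, the remaining checks ($\F_4,\F_5,\F_6,\F_{11}$ symmetric; $\F_1,\F_2,\F_3,\F_{10}$ trivial) go through as you indicated.
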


\begin{thm}\label{thm:equiv3}
The following properties are equivalent:
\begin{enumerate}
  \item
        $\n\eta=0$
  \item
        $\D\eta=\LL_{\xi}g=0$
  \item
        $\n\xi=0$
  \item
        $S=0$
  \item
        $S^{\flat}=0$
  \item
        $\n^{\parallel}=\n$
  \item
        $\M\in\F_1\oplus\F_2\oplus\F_3\oplus\F_{10}$.
\end{enumerate}
\end{thm}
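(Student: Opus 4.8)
\textbf{Proof plan for Theorem~\ref{thm:equiv3}.}
The plan is to establish the cycle of implications by leaning on the two preceding theorems rather than re-deriving everything from scratch. The cleanest route is to observe that conditions (1)--(5) and (7) arise as the intersection of the corresponding items in \thmref{thm:equiv1} and \thmref{thm:equiv2}: a bilinear form (here $\n\eta$, equivalently $S^{\flat}$, equivalently $Q^{\parallel v}$) vanishes precisely when it is both symmetric and skew-symmetric. So the first step is to record that $\n\eta=0 \iff (\n\eta \text{ symmetric}) \text{ and } (\n\eta \text{ skew-symmetric})$, and likewise for $S$, $S^{\flat}$ and $Q^{\parallel v}$; combined with \thmref{thm:equiv1} and \thmref{thm:equiv2} this immediately yields the equivalence of (1), (5) and the "symmetric part" of item (3)-type statements, together with $\D\eta=\LL_\xi g=0$ in (2). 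The class in (7) is then obtained as a set-theoretic intersection: $\left(\F_1\oplus\F_2\oplus\F_3\oplus\F_4\oplus\F_5\oplus\F_6\oplus\F_9\oplus\F_{10}\right)\cap\left(\F_1\oplus\F_2\oplus\F_3\oplus\F_7\oplus\F_8\oplus\F_{10}\right)=\F_1\oplus\F_2\oplus\F_3\oplus\F_{10}$, using that the basic classes $\F_i$ are mutually orthogonal (a direct-sum decomposition of the defining tensor space).

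Next I would connect $\n\eta$ with $\n\xi$ and with $S$. From the definition $S(x)=-\n_x\xi$ and $S^{\flat}(x,y)=g(S(x),y)=-g(\n_x\xi,y)=-(\n_x\eta)y$, the equivalences $(1)\iff(3)\iff(4)\iff(5)$ are immediate and purely algebraic: $\n\eta=0 \iff \n\xi=0 \iff S=0 \iff S^{\flat}=0$, since $g$ is nondegenerate and $\n_x\xi\in\HH$ already guarantees no vertical component is lost. For $(2)\iff(1)$, note that $2\D\eta(x,y)=(\n_x\eta)y-(\n_y\eta)x$ is the skew part and $(\LL_\xi g)(x,y)=(\n_x\eta)y+(\n_y\eta)x$ is (twice) the symmetric part of $\n\eta$, so $\D\eta=\LL_\xi g=0$ forces $\n\eta=0$ and conversely; this is exactly the "both symmetric and skew-symmetric" observation applied to $\n\eta$.

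For $(1)\iff(6)$, i.e. $\n\eta=0 \iff \n^{\parallel}=\n$, I would invoke \thmref{thm:D=n} directly: $\n^{\parallel}=\n$ iff $\M\in\F_1\oplus\F_2\oplus\F_3\oplus\F_{10}$, which by the formula \eqref{SvK=n} is the same as $\n\xi=0$, hence the same as $\n\eta=0$. Alternatively one can read it off from \eqref{Q}: $Q^{\parallel}(x,y)=-\eta(y)\n_x\xi+(\n_x\eta)(y)\xi$ vanishes identically iff both $\n\xi=0$ and $\n\eta=0$, and these are equivalent. Finally $(7)$ is already tied to $(6)$ through \thmref{thm:D=n}, closing the loop. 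I do not anticipate a genuine obstacle here; the only point requiring a little care is making the "intersection of classes" argument rigorous, i.e. justifying that membership in a direct sum of basic classes behaves like set intersection under the orthogonal decomposition of the space of tensors $F$ with the symmetries \eqref{F-prop} — but this is standard for Gray--Hervella-type classifications and is implicitly used throughout \cite{ManSta}, so I would simply cite it.
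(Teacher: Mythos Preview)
Your proposal is correct and matches the paper's (implicit) approach: the paper states \thmref{thm:equiv3} with no explicit proof, treating it as immediate from the formulae \eqref{QTShv}, the identity $S^{\flat}=-\n\eta$, and the earlier \thmref{thm:D=n}; your write-up simply spells out these immediate steps, including the observation that (1)--(5) and (7) are the intersection of the corresponding items in \thmref{thm:equiv1} and \thmref{thm:equiv2}.
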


In the same manner, we obtain similar linear relations between the torsion, the potential and the shape operator for $\tD$.

The equality $\tg(\xi,\xi)=1$ implies $\tg(\nn_x\xi,\xi)=0$ and therefore $\nn\xi\in\HH$ holds true.
The equality $\tS(x)=-\nn_x\xi$ defines the shape operator $\tS:\HH\mapsto\HH$ for $\tg$.

Now, having in mind \eqref{tQ} and \eqref{tT}, we express the horizontal and vertical components of $\tQ^{\parallel}$ and $\tT$ of $\tD$ as follows
\begin{equation}\label{tQThv}
\begin{array}{ll}
\tQ^{\parallel h}=\tS\otimes\eta,\qquad & \tQ^{\parallel v}=-\tS^{\flat}\otimes\xi,
\\
\tT^{\parallel h}=-\eta\wedge\tS,\qquad & \tT^{\parallel v}=-2\Alt(\tS^{\flat})\otimes\xi,
\end{array}
\end{equation}
where we denote $\tS^{\flat}(x,y)=\tg(\tS(x),y)$.

Bearing in mind that $(\nn_x \eta)(y)=(\n_x \eta)(y)-\eta(\Phi(x,y))$ and $\nn_x \xi=\n_x \xi +\Phi(x,\xi)$, we get
\begin{equation}\label{tSSPhi}
\tS(x)=S(x)-\Phi(x,\xi),\qquad \tS^{\flat}(x,y)=S^{\flat}(x,\f y)-\Phi(\xi,x,\f y).
\end{equation}
Moreover,
\eqref{Q}, \eqref{T}, \eqref{QTShv}, \eqref{tQ}, \eqref{tT} and \eqref{tQThv} imply the following relations
\begin{equation*}\label{tQQ}
\begin{array}{ll}
\tQ^{\parallel h}=Q^{\parallel h}-(\xi\lrcorner\Phi)\otimes\eta,\qquad & \tQ^{\parallel v}=Q^{\parallel v}-(\eta\circ\Phi)\otimes\xi,
\\
\tT^{\parallel h}=T^{\parallel h}+\eta\wedge(\xi\lrcorner\Phi),\qquad & \tT^{\parallel v}=T^{\parallel v},
\end{array}
\end{equation*}
where $\lrcorner$ denotes the interior product.

Subsequently, using the latter equalities and \eqref{tSSPhi}, we obtain the following formulae
\begin{gather*}\label{tQQS}
\tQ^{\parallel}=Q^{\parallel}+(\tS-S)\otimes\eta-(\tS^{\flat}-S^{\flat})\otimes\xi,
\\
\label{tTTS}
\tT^{\parallel}=T^{\parallel}+(\tS-S)\wedge\eta;
\\
\begin{array}{ll}\label{tQQTThvS}
\tQ^{\parallel h}=Q^{\parallel h}+(\tS-S)\otimes\eta,\qquad & \tQ^{\parallel v}=Q^{\parallel v}-(\tS^{\flat}-S^{\flat})\otimes\xi,
\\
\tT^{\parallel h}=T^{\parallel h}+(\tS-S)\wedge\eta,\qquad & \tT^{\parallel v}=T^{\parallel v}.
\end{array}
\end{gather*}

Bearing in mind the obtained results, we get the truthfulness of the following

\begin{thm}\label{thm:equiv-t1}
The following properties are equivalent:
\begin{enumerate}
  \item
        $\nn\eta$ is symmetric
  \item
      $\eta$ is closed
  \item
        $\tQ^{\parallel v}$ is symmetric
  \item
        $\tT^v$ vanishes
  \item
        $\tS$ is self-adjoint regarding $\tg$
  \item
        $\tS^{\flat}$ is symmetric
  \item
        $(\M,\f,\xi,\eta,\tg)\in\F_1\oplus\F_2\oplus\F_3\oplus\F_4\oplus\F_5\oplus\F_6\oplus\F_{9}\oplus\F_{10}$.
\end{enumerate}
\end{thm}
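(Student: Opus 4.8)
The plan is to establish the cyclic chain of equivalences $(1)\Leftrightarrow(2)\Leftrightarrow(3)\Leftrightarrow(4)\Leftrightarrow(5)\Leftrightarrow(6)$ first, and only then to pin down which classes in the Staikova–Gribachev scheme realize these conditions, giving $(7)$. Since this theorem is the tilded counterpart of \thmref{thm:equiv1}, I would follow that proof mutatis mutandis, using the formulae \eqref{tQThv} in place of \eqref{QTShv}. Concretely, $\nn\eta$ being symmetric means $(\nn_x\eta)(y)=(\nn_y\eta)(x)$, which by the Koszul-type identity $2\,\D\eta(x,y)=(\nn_x\eta)(y)-(\nn_y\eta)(x)$ valid for the torsion-free connection $\nn$ is equivalent to $\D\eta=0$; this gives $(1)\Leftrightarrow(2)$. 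The equivalences $(1)\Leftrightarrow(6)$, $(5)\Leftrightarrow(6)$ follow from $\tS^{\flat}(x,y)=\tg(\tS(x),y)$ together with $\tS^{\flat}=-\nn\eta$ (which is the tilded analogue of $S^{\flat}=-\n\eta$, itself immediate from $\tg(\nn_x\xi,\xi)=0$ and $\tg(\tS(x),y)=-\tg(\nn_x\xi,y)=-(\nn_x\eta)(y)+\eta(y)\,\eta(\nn_x\xi)=-(\nn_x\eta)(y)$); self-adjointness of $\tS$ with respect to $\tg$ is literally symmetry of $\tS^{\flat}$. For $(2)\Leftrightarrow(3)$ and $(2)\Leftrightarrow(4)$, I read off from \eqref{tQThv} that $\tQ^{\parallel v}=-\tS^{\flat}\otimes\xi$ and $\tT^{\parallel v}=-2\Alt(\tS^{\flat})\otimes\xi$, so symmetry of $\tQ^{\parallel v}$ is symmetry of $\tS^{\flat}$, and vanishing of $\tT^{\parallel v}$ is vanishing of $\Alt(\tS^{\flat})$, i.e.\ symmetry of $\tS^{\flat}$; the thread $(3)\Leftrightarrow(4)\Leftrightarrow(5)\Leftrightarrow(6)$ closes the loop.

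The remaining and genuinely content-bearing step is the identification of the class in $(7)$. Here I would apply \thmref{thm:equiv1} to the associated structure: $(\M,\f,\xi,\eta,\tg)$ is itself an almost paracontact almost paracomplex Riemannian manifold (with the roles of the metrics interchanged), so by \thmref{thm:equiv1} the condition ``$\nn\eta$ is symmetric'' is equivalent to $(\M,\f,\xi,\eta,\tg)$ lying in $\widetilde{\F}_1\oplus\widetilde{\F}_2\oplus\widetilde{\F}_3\oplus\widetilde{\F}_4\oplus\widetilde{\F}_5\oplus\widetilde{\F}_6\oplus\widetilde{\F}_9\oplus\widetilde{\F}_{10}$, where the tilded classes are the Staikova–Gribachev classes \emph{computed from $\tF$ rather than from $F$}. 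Since the classification of almost paracontact almost paracomplex Riemannian manifolds with a pseudo-Riemannian compatible metric is formally the same as for the Riemannian one, applying \thmref{thm:equiv1} to $(\M,\f,\xi,\eta,\tg)$ yields exactly assertion $(7)$ as written, with the understanding that the class membership is that of $(\M,\f,\xi,\eta,\tg)$ with respect to its own defining tensor $\tF$. Thus no new computation with \eqref{tFF} or \eqref{PhiF} is needed; the point is simply that the characterization of \thmref{thm:equiv1} is intrinsic to the triple $(\f,\xi,\eta)$ together with the chosen compatible metric and so transfers verbatim.

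The main obstacle, and the only place care is needed, is precisely this transfer: one must check that the derivation of the seven-class characterization in \thmref{thm:equiv1} never used positive-definiteness of $g$ — only the algebraic identities \eqref{strM}, \eqref{F-prop} and the class conditions \eqref{Fi}, which hold equally for $\tg$. If one prefers a fully self-contained argument, one can instead combine \eqref{tSSPhi}, which expresses $\tS^{\flat}(x,y)=S^{\flat}(x,\f y)-\Phi(\xi,x,\f y)$, with \eqref{PhiF} to write $\tS^{\flat}$ in terms of $F$, then impose $\tS^{\flat}(x,y)=\tS^{\flat}(y,x)$ and match the resulting tensor identity against the class conditions \eqref{Fi}; this reproduces the list in $(7)$ but at the cost of a somewhat lengthy type-$(0,2)$ computation. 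Either way the statement follows, and I would present the short route via \thmref{thm:equiv1} applied to the associated manifold, with a one-line remark that the argument of \thmref{thm:equiv1} is metric-signature independent.
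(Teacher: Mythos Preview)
Your proposal is correct and matches the paper's approach: the paper gives no explicit proof at all, merely prefacing the theorem with ``Bearing in mind the obtained results, we get the truthfulness of the following'' after recording the tilded analogues \eqref{tQThv} and the relations \eqref{tSSPhi}, so your argument is exactly the intended ``mutatis mutandis'' transfer of \thmref{thm:equiv1} to the associated manifold $(\M,\f,\xi,\eta,\tg)$. One cosmetic point: the classification is due to Manev--Staikova \cite{ManSta}, not Staikova--Gribachev.
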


\begin{thm}\label{thm:equiv-t2}
The following properties are equivalent:
\begin{enumerate}
  \item
        $\nn\eta$ is skew-symmetric
  \item
        $\xi$ is Killing with respect to $\tg$, i.e. $\LL_{\xi}\tg=0$
  \item
        $\tQ^{\parallel v}$ is skew-symmetric
  \item
        $\tS$ is anti-self-adjoint regarding $\tg$
  \item
        $\tS^{\flat}$ is skew-symmetric
  \item
        $(\M,\f,\xi,\eta,\tg)\in\F_1\oplus\F_2\oplus\F_3\oplus\F_7\oplus\F_{9}$.
\end{enumerate}
\end{thm}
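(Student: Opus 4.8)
The plan is to reduce, exactly as in the proof of Theorem~\ref{thm:equiv2}, all six statements to the single condition that $\tS^{\flat}$ be skew-symmetric, and then identify the class. For the first reduction I would use the relations already recorded for $\tD$. By \eqref{tQThv} we have $\tQ^{\parallel v}=-\tS^{\flat}\otimes\xi$, so item (3) is literally ``$\tS^{\flat}$ skew''; the definition $\tS(x)=-\nn_x\xi$ together with $\tg(\nn_x\xi,\xi)=0$ gives $(\nn_x\eta)(y)=\tg(\nn_x\xi,y)=-\tS^{\flat}(x,y)$, whence item (1) reads ``$\tS^{\flat}$ skew'' and, since $(\LL_{\xi}\tg)(x,y)=\tg(\nn_x\xi,y)+\tg(x,\nn_y\xi)=-\bigl(\tS^{\flat}(x,y)+\tS^{\flat}(y,x)\bigr)$, item (2) says the symmetric part of $\tS^{\flat}$ vanishes, i.e. again ``$\tS^{\flat}$ skew''; finally anti-self-adjointness of $\tS$ with respect to $\tg$ is $\tg(\tS x,y)=-\tg(x,\tS y)$, that is $\tS^{\flat}(x,y)=-\tS^{\flat}(y,x)$, which is items (4) and (5). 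Thus (1)--(5) are mutually equivalent and amount to: $\tS^{\flat}$ is skew-symmetric.

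It remains to prove that this happens precisely on $\F_1\oplus\F_2\oplus\F_3\oplus\F_7\oplus\F_9$. For this I would make $\tS^{\flat}$ explicit. By \eqref{tSSPhi}, $\tS^{\flat}(x,y)=S^{\flat}(x,\f y)-\Phi(\xi,x,\f y)$; by the second identity in \eqref{F-prop} together with $S^{\flat}=-\n\eta$ we have $S^{\flat}(x,y)=F(x,\f y,\xi)$, hence $S^{\flat}(x,\f y)=F(x,\f^2 y,\xi)$, while $\Phi(\xi,x,\f y)$ is obtained from \eqref{PhiF} by substituting $(\xi,x,\f y)$ and simplifying with \eqref{strM} and \eqref{F-prop} (using $\f\xi=0$, $\eta\circ\f=0$, $\eta(\xi)=1$ and the symmetry $F(x,y,z)=F(x,z,y)$). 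This writes $\tS^{\flat}$ as a universal linear combination of components of $F$ having one argument equal to $\xi$; equivalently, the $\tg$-analogue of \eqref{F-prop} yields the shorter formula $\tS^{\flat}(x,y)=\tF(x,\f y,\xi)$, which may be combined with \eqref{tFF}. I would then impose $\tS^{\flat}(x,y)+\tS^{\flat}(y,x)=0$ and compare the resulting identity with the characteristic conditions \eqref{Fi}; alternatively one reads off $\nn\xi$ in each basic class from \eqref{tSSPhi}, \eqref{Fi:nxi} and \eqref{PhiF} — the $\tg$-counterpart of \eqref{Fi:nxi} noted after \eqref{PhiF} — and checks class by class when $\tS$ is anti-self-adjoint. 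The summands $\F_1,\F_2,\F_3$ enter because there $\nn\xi=0$, and $\F_7$, $\F_9$ are singled out by the sign pattern of the $\xi$-components of $\tF$. Combining this with the equivalences (1)--(5) established above completes the proof, in the same spirit as Theorems~\ref{thm:equiv-t1} and \ref{thm:equiv2}.

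The step I expect to be the main obstacle is precisely this class identification: carrying \eqref{PhiF} (or \eqref{tFF}) through the substitution $(\xi,x,\f y)$ without sign slips and then matching the symmetrized bilinear form against all eleven class conditions. The delicate feature is that the computation does \emph{not} reproduce the $g$-answer of Theorem~\ref{thm:equiv2}, where $\F_8$ and $\F_{10}$ occur in place of $\F_9$; the discrepancy reflects the non-trivial correspondence between the classes of $(\M,\f,\xi,\eta,g)$ and of $(\M,\f,\xi,\eta,\tg)$ encoded in \eqref{tFF} — the same phenomenon already recorded in Lemma~\ref{lem:U1} and in the proof of Theorem~\ref{thm:tD=nn}, where the class $\F_{10}$ for $g$ corresponds to $\F_9$ for $\tg$. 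Once $\tS^{\flat}$ has been written out and its symmetric part computed, reading off the class from \eqref{Fi} is routine.
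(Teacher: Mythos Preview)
Your proposal is correct and follows essentially the same approach as the paper: the paper states the theorem after the formulae \eqref{tQThv} and \eqref{tSSPhi} with only the phrase ``Bearing in mind the obtained results, we get the truthfulness of the following'', so its implicit argument is exactly the reduction of (1)--(5) to the skew-symmetry of $\tS^{\flat}$ via \eqref{tQThv} and the definition of $\tS$, followed by the class identification through the $\tg$-analogue of \eqref{Fi:nxi} alluded to after \eqref{PhiF}. Your write-up is in fact more detailed than the paper's own.
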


\begin{thm}\label{thm:equiv-t3}
The following properties are equivalent:
\begin{enumerate}
  \item
        $\nn\eta=0$
  \item
        $\D\eta=\LL_{\xi}\tg=0$
  \item
        $\nn\xi=0$
  \item
        $\tS=0$
  \item
        $\tS^{\flat}=0$
  \item
        $\tD=\nn$
  \item
        $(\M,\f,\xi,\eta,\tg)\in\F_1\oplus\F_2\oplus\F_3\oplus\F_{9}$.
\end{enumerate}
\end{thm}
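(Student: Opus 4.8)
The plan is to establish \thmref{thm:equiv-t3} as the exact $\tg$-counterpart of \thmref{thm:equiv3}, by showing that each of the conditions (1), (2), (4), (5), (6), (7) is equivalent to (3), i.e.\ to $\nn\xi=0$. The algebraic equivalences are immediate from the definitions: $\tS(x)=-\nn_x\xi$ gives (3)$\Leftrightarrow$(4); non-degeneracy of $\tg$ together with $\tS^{\flat}(x,y)=\tg(\tS(x),y)$ gives (4)$\Leftrightarrow$(5); and from $\eta(\xi)=1$ and $\eta\circ\f=0$ in \eqref{strM} one has $\eta=\tg(\xi,\cdot)$, so that $\nn\tg=0$ yields the intrinsic identity $(\nn_x\eta)(y)=\tg(\nn_x\xi,y)=-\tS^{\flat}(x,y)$, whence (1)$\Leftrightarrow$(3) at once.

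Next I would treat (2)$\Leftrightarrow$(3). Using $(\nn_x\eta)(y)=-\tS^{\flat}(x,y)$ and the fact that $\nn$ is torsion-free, $\D\eta(x,y)=(\nn_x\eta)(y)-(\nn_y\eta)(x)=-\tS^{\flat}(x,y)+\tS^{\flat}(y,x)$, while $(\LL_{\xi}\tg)(x,y)=\tg(\nn_x\xi,y)+\tg(x,\nn_y\xi)=-\tS^{\flat}(x,y)-\tS^{\flat}(y,x)$. Hence $\D\eta=0$ expresses the symmetry of $\tS^{\flat}$ and $\LL_{\xi}\tg=0$ its skew-symmetry; imposed together they force $\tS^{\flat}=0$, which is (5) and therefore (3), while conversely $\nn\xi=0$ obviously kills both $\D\eta$ and $\LL_{\xi}\tg$. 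For (6)$\Leftrightarrow$(3) I would invoke \eqref{tSvK=n} (equivalently, the horizontal component $\tQ^{\parallel h}=\tS\otimes\eta$ of \eqref{tQThv} together with $\nn\xi\in\HH$): the difference $\tD_xy-\nn_xy=-\eta(y)\nn_x\xi+(\nn_x\eta)(y)\xi$ has $\HH$-part $-\eta(y)\nn_x\xi$, which vanishes for all $x,y$ (take $y=\xi$) exactly when $\nn\xi=0$, and in that case its $\VV$-part vanishes too since then $\nn\eta=0$. Finally (6)$\Leftrightarrow$(7) is precisely \thmref{thm:tD=nn}, so nothing new is needed there.

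There is no genuinely hard step here; the work is bookkeeping. The point to be careful about is to use the intrinsic relation $\tS^{\flat}=-\nn\eta$ (with respect to $\tg$), so that the $\f$-twisted expression for $\tS^{\flat}$ in \eqref{tSSPhi} is not required, and to read condition (7) as a statement about the $\tF$-classification of $(\M,\f,\xi,\eta,\tg)$, consistently with \thmref{thm:tD=nn}; in particular the class here is $\F_1\oplus\F_2\oplus\F_3\oplus\F_{9}$, not the $\F_1\oplus\F_2\oplus\F_3\oplus\F_{10}$ of \thmref{thm:equiv3}, the switch $\F_{10}\rightsquigarrow\F_{9}$ being the $g\leftrightarrow\tg$ interchange recorded in \lemref{lem:U1}. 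Should a self-contained derivation of (7) be wanted, one would substitute the pointwise condition equivalent to $\nn\xi=0$ into the $\tF$-version of the class equations \eqref{Fi}, exactly as in the proof of \thmref{thm:tD=nn}. Chaining (1)$\Leftrightarrow$(2)$\Leftrightarrow$(3)$\Leftrightarrow$(4)$\Leftrightarrow$(5)$\Leftrightarrow$(6)$\Leftrightarrow$(7) then finishes the proof.
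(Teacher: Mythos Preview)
Your proposal is correct and follows essentially the same approach as the paper, which proves \thmref{thm:equiv-t3} only implicitly by the phrase ``bearing in mind the obtained results'' after recording \eqref{tQThv} and the $\tS$--$S$ relations, i.e.\ by direct analogy with \thmref{thm:equiv3}. You simply spell out those analogous steps explicitly and invoke \thmref{thm:tD=nn} for (6)$\Leftrightarrow$(7), which is exactly what the paper intends.
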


\section{Curvature properties of the pair of connections $\n^{\parallel}$ and $\tD$}

Let $R$ be the curvature tensor of $\n$, i.e. $R=[\n\ , \n\ ]
- \n_{[\ ,\ ]}$ and let the corresponding $(0,4)$-tensor be
determined by $R(x,y,z,w)=g(R(x,y)z,w)$. The Ricci tensor
$\rho$ and the scalar curvature $\tau$  are defined by
$\rho(y,z)=g^{ij}R(e_i,y,z,e_j)$ and
$\tau=g^{ij}\rho(e_i,e_j)$.

An arbitrary non-degenerate 2-plane $\al$ in
$T_p\M$ with an arbitrary basis $\{x,y\}$ has the following sectional curvature
$
k(\al;p)=\frac{R(x,y,y,x)}{\pi_1(x,y,y,x)}
$ with respect to $g$ and $R$, where $\pi_1(x,y,z,w)=g(y,z)g(x,w)-g(x,z)g(y,w)$.
A 2-plane $\al$ is said to be a \emph{$\xi$-section}, a \emph{$\f$-holomorphic section} or a \emph{$\f$-totally real section}
if $\xi \in \al$, $\al= \f\al$ or $\al\bot \f\al$ regarding $g$, respectively. The latter type of sections exists for $\dim \M\geq 5$.

Let $R^{\parallel}$, $\rho^{\parallel}$, $\tau^{\parallel}$ and $k^{\parallel}$ denote the curvature tensor, the Ricci tensor, the scalar curvature and the sectional curvature of $\n^{\parallel}$, respectively. The corresponding $(0,4)$-tensor is
determined by $R^{\parallel}(x,y,z,w)=g(R^{\parallel}(x,y)z,w)$. Analogously, by
$\tR$, $\widetilde\rho$, $\widetilde\tau$, $\widetilde{k}$ and
$\widetilde{R}^\parallel$, $\widetilde{\rho}^\parallel$, $\widetilde{\tau}^\parallel$, $\widetilde{k}^\parallel$
we denote the corresponding quantities for the connections $\tn$ and $\tD$, respectively, where the $(0,4)$-tensors of $\tR$ and $\widetilde{R}^\parallel$ are obtained by $\tg$.

\begin{thm}\label{thm:KR}
The curvature tensors of 
$\n^{\parallel}$ and 
$\n$ ($\tD$ and $\tn$, respectively) are related as follows
\begin{equation}\label{RDR}
\begin{array}{l}
  R^{\parallel}(x,y,z,w)=R\left(x,y,\f^2z,\f^2w\right)+\pi_1\bigl(S(x),S(y),z,w\bigr),\\
  \widetilde{R}^\parallel(x,y,z,w)=\tR\left(x,y,\f^2z,\f^2w\right)+\widetilde\pi_1\bigl(\tS(x),\tS(y),z,w\bigr),
\end{array}
\end{equation}
where $\widetilde\pi_1(x,y,z,w)=\tg(y,z)\tg(x,w)-\tg(x,z)\tg(y,w)$.
\end{thm}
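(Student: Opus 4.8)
The plan is to compute the curvature tensor $R^{\parallel}$ directly from the defining formula \eqref{SvK=n}, namely $\n^{\parallel}_x y = \n_x y -\eta(y)\n_x \xi+(\n_x \eta)(y)\,\xi$, which using the shape operator $S(x)=-\n_x\xi$ and $S^{\flat}(x,y)=-(\n_x\eta)(y)$ rewrites as $\n^{\parallel}_x y = \n_x y +\eta(y)S(x)-S^{\flat}(x,y)\,\xi$. First I would substitute this into $R^{\parallel}(x,y)z = \n^{\parallel}_x\n^{\parallel}_y z - \n^{\parallel}_y\n^{\parallel}_x z - \n^{\parallel}_{[x,y]}z$ and expand. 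The zeroth-order term reproduces $R(x,y)z$; the remaining terms collect into expressions involving $\n S$, $\n S^{\flat}$ and the algebraic (quadratic-in-$S$) pieces. Since $\n^{\parallel}$ preserves $\xi$, $\eta$ and $g$ by \thmref{thm:D-T}, the tensor $R^{\parallel}$ is a curvature tensor of a metric connection, so it suffices to check the $(0,4)$ identity \eqref{RDR} on arguments, and one may freely reduce $z,w$ to horizontal vectors by the projector $\f^2$ (the $\xi$-components drop out because $R^{\parallel}(x,y,\xi,w)=R^{\parallel}(x,y,z,\xi)=0$, which itself follows from $\n^{\parallel}\xi=\n^{\parallel}\eta=0$).

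The key computational step is the identification of the derivative terms. After expansion one obtains a contribution of the schematic form $\eta(z)\bigl\{(\n_x S)(y) - (\n_y S)(x) - S([x,y])\cdots\bigr\}$ together with $\xi$-valued terms built from $\n S^{\flat}$; these must be shown to vanish when evaluated on the horizontal-reduced arguments $\f^2 z$, $\f^2 w$. The mechanism is that $\n^{\parallel}$ annihilates $\eta$, so the term $\eta(z)(\cdots)$ is killed by replacing $z$ with $\f^2 z$ (since $\eta(\f^2 z)=0$), and likewise the $\xi$-valued terms pair trivially against $\f^2 w$. Thus the only surviving non-$R$ contribution is the purely algebraic one, $\eta(z)S^{\flat}(y,w)$-type products, which reassemble precisely into $g(S(x),z)g(S(y),w)-g(S(y),z)g(S(x),w)$ restricted to horizontal arguments, i.e. $\pi_1(S(x),S(y),z,w)$ — here one uses $g(S(x),\f^2 w)=g(S(x),w)$ because $S(x)\in\HH$. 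This yields the first formula in \eqref{RDR}; the second is obtained verbatim by replacing $\n,g,S,\pi_1$ with $\nn,\tg,\tS,\widetilde\pi_1$ and using \eqref{tSvK=n}, $\nn^{\parallel}\tg=0$, together with $\tg(\tS(x),\f^2 w)=\tg(\tS(x),w)$ since $\nn\xi\in\HH$.

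The main obstacle I expect is the careful bookkeeping in the expansion of the second covariant derivative $\n^{\parallel}_x(\n^{\parallel}_y z)$: each of the three correction terms in $\n^{\parallel}_y z$ must again be fed through $\n^{\parallel}_x$, producing cross-terms such as $\n^{\parallel}_x(\eta(z)S(y))$, where one must remember that $\n^{\parallel}_x$ acts on the scalar $\eta(z)$ (giving $(\n^{\parallel}_x\eta)(z)=0$ plus $\eta(\n^{\parallel}_x z)$, the latter again $0$ after horizontal reduction) and on the vector $S(y)$ via $\n^{\parallel}$, not $\n$. The antisymmetrization in $x,y$ and subtraction of $\n^{\parallel}_{[x,y]}z$ must then cancel all terms containing a bare $\n S$ or a second-order derivative, leaving only the stated algebraic residue. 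Because this is a routine but lengthy identity-checking exercise, I would present it by stating the expanded form of $\n^{\parallel}_x\n^{\parallel}_y z$, invoking $\n^{\parallel}\eta=\n^{\parallel}\xi=0$ to discard the unwanted pieces after evaluating against $\f^2 z,\f^2 w$, and collecting the quadratic terms into $\pi_1(S(x),S(y),\cdot,\cdot)$, with the $\tD$-case following by the same manipulation applied to $\nn$.
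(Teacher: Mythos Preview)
Your proposal is correct and follows essentially the same route as the paper: expand $R^{\parallel}$ from \eqref{SvK=n} using $\n^{\parallel}\xi=0$ and $g(\n_x\xi,\xi)=0$, and identify the surviving pieces. The paper streamlines the bookkeeping you anticipate by first recording the clean $(1,3)$-identity
\[
R^{\parallel}(x,y)z=R(x,y)z-\eta(z)R(x,y)\xi-\eta(R(x,y)z)\xi-g(\n_x\xi,z)\n_y\xi+g(\n_y\xi,z)\n_x\xi,
\]
in which your ``derivative terms'' have already recombined into $-\eta(z)R(x,y)\xi$ and $-\eta(R(x,y)z)\xi$; contracting with $g(\cdot,w)$ then gives \eqref{RDR} directly, so the horizontal reduction you plan is absorbed into this single formula rather than applied piecewise.
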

\begin{proof}
Using \eqref{SvK=n} and $g(\n_x\xi,\xi)=0$ for arbitrary $x$ and $\n^{\parallel}\xi=0$, we get
\[
\begin{array}{l}
  R^{\parallel}(x,y)z=R(x,y)z-\eta(z)R(x,y)\xi-\eta(R(x,y)z)\xi\\
  \phantom{R^{\parallel}(x,y)z=}
  -g\left(\n_x\xi,z\right)\n_y\xi+g\left(\n_y\xi,z\right)\n_x\xi,
\end{array}
\]
which implies the first relation in \eqref{RDR}.

The second equality in \eqref{RDR} follows in similar way, but in terms of $\tD$, $\tn$ and their corresponding metric $\tg$.
\end{proof}

Let $\widetilde\tr$ denotes the trace with respect to $\tg$. We get the following
\begin{cor}\label{cor:Ric}
The Ricci tensors of 
$\n^{\parallel}$ and
$\n$ ($\tD$ and $\tn$, respectively) are related as follows
\begin{equation}\label{RicDRic}
\begin{array}{l}
  \rho^{\parallel}(y,z)=\rho(y,z)-\eta(z)\rho(y,\xi)-R(\xi,y,z,\xi)\\
  \phantom{\rho^{\parallel}(y,z)=\rho(y,z)}
-g(S^2(y),z)+\tr(S)g(S(y),z),\\
  \widetilde{\rho}^\parallel(y,z)=\widetilde\rho(y,z)-\eta(z)\widetilde\rho(y,\xi)-\tR(\xi,y,z,\xi)\\
\phantom{  \widetilde{\rho}^\parallel(y,z)=\widetilde\rho(y,z)}
-\tg(\tS^2(y),z)+\widetilde\tr(\tS)\tg(\tS(y),z).
\end{array}
\end{equation}
\end{cor}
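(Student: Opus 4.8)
The plan is to derive the Ricci identities by tracing the curvature relations from \thmref{thm:KR}. Starting from the first equality in \eqref{RDR}, I would set $w=e_j$, $y$ in the first slot replaced by $e_i$, and contract with $g^{ij}$ to form $\rho^{\parallel}(y,z)=g^{ij}R^{\parallel}(e_i,y,z,e_j)$. On the right-hand side, the term $R(e_i,y,\f^2 z,\f^2 e_j)$ must be handled carefully: since $\f^2 = I - \eta\otimes\xi$, expanding $\f^2 z = z - \eta(z)\xi$ and $\f^2 e_j = e_j - \eta(e_j)\xi$ produces four terms. Contracting with $g^{ij}$, the leading term gives $\rho(y,z)$, the cross terms yield $-\eta(z)\,g^{ij}R(e_i,y,\xi,e_j)$ together with a term from $\f^2 e_j$ that, after using $g^{ij}\eta(e_j)e_i = \xi$, contributes $-R(\xi,y,\f^2 z,\xi)$; a short computation using $R(x,y,\xi,\xi)=0$ and re-expanding $\f^2 z$ collapses these to $-\eta(z)\rho(y,\xi)-R(\xi,y,z,\xi)$ as in the statement.

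The second group of terms comes from $g^{ij}\pi_1(S(e_i),S(y),z,e_j)$. Writing out $\pi_1$, this is $g^{ij}\bigl[g(S(y),z)g(S(e_i),e_j)-g(S(e_i),z)g(S(y),e_j)\bigr]$. The first piece gives $g(S(y),z)\,g^{ij}g(S(e_i),e_j) = \tr(S)\,g(S(y),z)$. The second piece gives $-g^{ij}g(S(e_i),z)g(S(y),e_j)$; using $g^{ij}g(S(y),e_j) = S(y)^i$ (the components of $S(y)$) and then $g(S(e_i),z)$ summed against those components yields $-g(S^2(y),z)$, where one uses self-adjointness is \emph{not} required here since $g(S(S(y)),z)$ emerges directly from the contraction. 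Assembling both groups gives exactly the first line of \eqref{RicDRic}.

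For the second equality, the argument is formally identical but carried out with $\tg$ in place of $g$: one contracts the second relation in \eqref{RDR} with $\tg^{ij}$ relative to a $\tg$-basis, uses $\widetilde\pi_1$ in place of $\pi_1$, the identity $\f^2 = I-\eta\otimes\xi$ again (which is metric-independent), and the facts $\tg(\nn_x\xi,\xi)=0$, $\tR(x,y,\xi,\xi)=0$. The definition $\widetilde\rho^\parallel(y,z)=\tg^{ij}\widetilde R^\parallel(e_i,y,z,e_j)$ and $\widetilde\tr(\tS)=\tg^{ij}\tg(\tS(e_i),e_j)$ make the bookkeeping go through verbatim, producing the second line of \eqref{RicDRic}.

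The main obstacle, and the only genuinely delicate point, is the careful expansion of the $R(x,y,\f^2 z,\f^2 w)$ term: one must track which $\xi$-contractions survive and verify that the mixed contribution from the two $\f^2$ factors recombines into precisely $-\eta(z)\rho(y,\xi)-R(\xi,y,z,\xi)$ rather than leaving an extra $\eta(z)R(\xi,y,\xi,\xi)$-type term — but such terms vanish by the skew-symmetry $R(x,y,\xi,\xi)=0$. Once this is checked, the rest is a routine trace computation, so I would state that the identities "follow by contracting \eqref{RDR}" and include only the $\f^2$-expansion as the explicit step.
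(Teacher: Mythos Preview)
Your proposal is correct and is exactly the intended argument: the paper presents \corref{cor:Ric} as an immediate consequence of \thmref{thm:KR} with no explicit proof, and the computation you outline---expanding $\f^2 z$ and $\f^2 w$ via $\f^2=I-\eta\otimes\xi$, using $R(\cdot,\cdot,\xi,\xi)=0$ and $g^{ij}\eta(e_j)e_i=\xi$, then tracing $\pi_1(S(e_i),S(y),z,e_j)$---is precisely how one obtains \eqref{RicDRic} from \eqref{RDR}. Your remark that self-adjointness of $S$ is not needed for the $-g(S^2(y),z)$ term is accurate and worth keeping.
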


Using \eqref{PhiF} and \eqref{F-prop}, we get that $g^{ij}\Phi(\xi,e_i,e_j)=0$. Therefore, bearing in mind \eqref{divtr} and the definitions of $S$ and $\tS$, we have that
$\tr(S)=\widetilde\tr(\tS)=-\Div(\eta)$.

The definition of the shape operator implies
$
R(x,y)\xi =-\left(\n_x S\right)y+\left(\n_y S\right)x.
$
Then, the latter formula and $S(\xi)=-\n_{\xi} \xi=-\f\omega^{\sharp}$ lead to the following expression
\[
R(\xi,y,z,\xi) = g\bigl(\left(\n_{\xi} S\right)y-\left(\n_y S\right)\xi,z\bigr)
=g\bigl(\left(\n_{\xi} S\right)y-\n_y S(\xi)- S(S(y)),z\bigr).
\]
Then, taking the trace of the latter equalities and using
the relations
\[
\Div(\omega\circ\f)=g^{ij}\left(\n_{e_i} \omega\circ\f\right)e_j=g^{ij}g\left(\n_{e_i}\f \omega^{\sharp},e_j\right)=-\Div(S(\xi)).
\]
we obtain
\begin{equation}\label{ricxixi}
\rho(\xi,\xi)= \tr(\n_{\xi}S)-\Div(S(\xi))-\tr(S^2).
\end{equation}

Similarly, we get the equalities for the quantities with respect to $\tg$, i.e.
\begin{equation}\label{tricxixi}
\widetilde\rho(\xi,\xi)= \widetilde\tr(\nn_{\xi}\tS)-\widetilde\Div(\tS(\xi))-\widetilde\tr(\tS^2).
\end{equation}
Bearing in mind the latter results and \corref{cor:Ric}, we obtain the following

\begin{cor}\label{cor:tau}
The scalar curvatures of 
$\n^{\parallel}$ and 
$\n$ ($\tD$ and $\tn$, respectively) are related as follows
\begin{equation*}\label{tauDtau}
\begin{array}{l}
  \tau^{\parallel}=\tau-2\rho(\xi,\xi)-\tr(S^2) + (\tr(S))^2,\\
  \widetilde\tau^{\parallel}=\widetilde\tau-2\widetilde\rho(\xi,\xi)-\widetilde\tr(\tS^2) + (\widetilde\tr(\tS))^2,
\end{array}
\end{equation*}
where $\rho(\xi,\xi)$ and $\widetilde\rho(\xi,\xi)$ are expressed in \eqref{ricxixi} and \eqref{tricxixi}, respectively.
\end{cor}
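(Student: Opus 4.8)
The plan is to take the trace, with respect to $g$, of the first of the Ricci relations \eqref{RicDRic} in \corref{cor:Ric}. By definition the scalar curvature of $\n^{\parallel}$ is $\tau^{\parallel}=g^{ij}\rho^{\parallel}(e_i,e_j)$, so it suffices to contract each of the five summands on the right-hand side of that equality against $g^{ij}$ in the slots $y=e_i$, $z=e_j$ and to add the results.

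Carrying this out term by term: the trace of $\rho(y,z)$ is $\tau$; for $-\eta(z)\rho(y,\xi)$, using $g^{ij}\eta(e_j)e_i=\xi$, one gets $-\rho(\xi,\xi)$; for $-R(\xi,y,z,\xi)$, the skew-symmetry of the Levi-Civita curvature in its first and in its last pair of arguments gives $R(\xi,e_i,e_j,\xi)=R(e_i,\xi,\xi,e_j)$, whence the trace equals $\rho(\xi,\xi)$ and this summand contributes $-\rho(\xi,\xi)$; the trace of $-g(S^2(y),z)$ is $-\tr(S^2)$; and the trace of $\tr(S)\,g(S(y),z)$ is $(\tr(S))^2$. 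Adding up yields $\tau^{\parallel}=\tau-2\rho(\xi,\xi)-\tr(S^2)+(\tr(S))^2$, the first claimed identity. The second identity follows by the same computation performed verbatim with $\tg$, $\nn$, $\tR$, $\widetilde\rho$, $\tS$, $\widetilde\tr$ and $\widetilde\pi_1$ replacing $g$, $\n$, $R$, $\rho$, $S$, $\tr$ and $\pi_1$, using the second line of \eqref{RicDRic} and the fact that the same curvature symmetries are valid for the Levi-Civita connection $\nn$ of the pseudo-Riemannian metric $\tg$. Substituting \eqref{ricxixi} and \eqref{tricxixi} for $\rho(\xi,\xi)$ and $\widetilde\rho(\xi,\xi)$ then gives the fully explicit versions if one wishes.

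The whole argument is a chain of linear traces of identities already proved, so I anticipate no genuine difficulty; the single step that needs an argument rather than bookkeeping is the equality $g^{ij}R(\xi,e_i,e_j,\xi)=\rho(\xi,\xi)$, which is precisely where the symmetries of a Levi-Civita curvature tensor enter and which would fail for a general non-metric connection. As a sanity check, in the class $\F_1\oplus\F_2\oplus\F_3\oplus\F_{10}$ one has $\n\xi=0$, hence $S=0$ and $R(\cdot,\cdot)\xi=0$, so that $\rho(\xi,\xi)$ and the $S$-terms all vanish and the formula correctly collapses to $\tau^{\parallel}=\tau$, in agreement with \thmref{thm:D=n}.
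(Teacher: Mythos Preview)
Your proposal is correct and follows the same route as the paper: the corollary is obtained by taking the $g$-trace (respectively $\tg$-trace) of the Ricci relation in \corref{cor:Ric}, and you have merely spelled out the term-by-term computation that the paper leaves implicit. The one substantive step you identify, namely $g^{ij}R(\xi,e_i,e_j,\xi)=\rho(\xi,\xi)$ via the Levi-Civita curvature symmetries, is exactly the point the paper relies on tacitly.
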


Moreover, using  \thmref{thm:KR} we get the following

\begin{cor}\label{cor:kDk}
The sectional curvatures of an arbitrary 2-plane $\alpha$ at $p\in \M$, for an arbitrary basis $\{x,y\}$ of $\alpha$, regarding 
$\n^{\parallel}$ and 
$\n$ ($\tD$ and $\tn$, respectively) are related as follows
\begin{equation}\label{kDk}
\begin{split}
  k^{\parallel}(\al;p)&=k(\al;p)\\
  &\phantom{=}+\frac{\pi_1(S(x),S(y),y,x)-\eta(x)R(x,y,y,\xi)-\eta(y)R(x,y,\xi,x)}{\pi_1(x,y,y,x)},\\
  \widetilde k^{\parallel}(\al;p)&=\widetilde k(\al;p)\\
  &\phantom{=}+\frac{\widetilde\pi_1(\tS(x),\tS(y),y,x)-\eta(x)\tR(x,y,y,\xi)
  -\eta(y)\tR(x,y,\xi,x)}{\widetilde\pi_1(x,y,y,x)}.
\end{split}
\end{equation}
\end{cor}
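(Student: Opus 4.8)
The plan is to derive \corref{cor:kDk} directly from \thmref{thm:KR} by evaluating the two $(0,4)$-curvature tensors on the quadruple $(x,y,y,x)$ and dividing by the appropriate $\pi_1$, which is precisely the defining recipe for the sectional curvature of a metric connection. Since $\n^{\parallel}$ is metric for $g$ and $\tD$ is metric for $\tg$, we have $k^{\parallel}(\al;p)=R^{\parallel}(x,y,y,x)/\pi_1(x,y,y,x)$ and $\widetilde k^{\parallel}(\al;p)=\widetilde{R}^{\parallel}(x,y,y,x)/\widetilde\pi_1(x,y,y,x)$ for any basis $\{x,y\}$ of $\al$.

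First I would put $z=y$, $w=x$ in the first identity of \eqref{RDR}, giving
\[
R^{\parallel}(x,y,y,x)=R\bigl(x,y,\f^2 y,\f^2 x\bigr)+\pi_1\bigl(S(x),S(y),y,x\bigr).
\]
Then I would expand the first summand by writing $\f^2 u=u-\eta(u)\xi$ (from \eqref{strM}) and using multilinearity of $R$:
\[
R\bigl(x,y,\f^2 y,\f^2 x\bigr)=R(x,y,y,x)-\eta(x)R(x,y,y,\xi)-\eta(y)R(x,y,\xi,x)+\eta(x)\eta(y)R(x,y,\xi,\xi).
\]
The only point that needs a remark is that the last term vanishes, since $R$ is skew-symmetric in its last two arguments, so $R(x,y,\xi,\xi)=0$. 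Substituting back and dividing by $\pi_1(x,y,y,x)$ yields exactly the first equality of \eqref{kDk}.

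For the second equality I would run the identical computation with $R^{\parallel}$, $R$, $g$, $\pi_1$, $S$ replaced by $\widetilde{R}^{\parallel}$, $\tR$, $\tg$, $\widetilde\pi_1$, $\tS$, invoking the second identity of \eqref{RDR}; the relation $\f^2 u=u-\eta(u)\xi$ and the skew-symmetry of $\tR$ in the last two slots (so that $\tR(x,y,\xi,\xi)=0$) are again all that is required, and the term $\widetilde\pi_1(\tS(x),\tS(y),y,x)$ together with the two $\tR$-terms assemble into the displayed fraction. There is no real obstacle here: the argument is a substitution into \thmref{thm:KR} followed by one elementary algebraic identity, and the only thing to be careful about is keeping track of which quantities ($R$, $S$, $\pi_1$ versus $\tR$, $\tS$, $\widetilde\pi_1$) are taken with respect to $g$ and which with respect to $\tg$.
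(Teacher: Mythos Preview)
Your proposal is correct and follows exactly the route the paper intends: the corollary is obtained from \thmref{thm:KR} by setting $z=y$, $w=x$, expanding $\f^2 u=u-\eta(u)\xi$, using the skew-symmetry $R(x,y,\xi,\xi)=0$ (and its tilde analogue), and dividing by the corresponding $\pi_1$. The paper records this step only as ``using \thmref{thm:KR}'', so your write-up is simply a more explicit version of the same argument.
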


Let $\al_{\xi}$ be a $\xi$-section  at $p\in \M$ with a basis $\{x,\xi\}$. Then, using \eqref{kDk}, $g(S(x),\xi)=0$ and $\tg(\tS(x),\xi)=0$ for arbitrary $x$, we get that
\[
k^{\parallel}(\al_{\xi};p)=0, \quad \widetilde{k}^\parallel(\al_{\xi};p)=0.
\]
Let $\al_{\f}$ be a $\f$-section  at $p\in \M$ with a basis $\{x,y\}$. Then, using \eqref{kDk} and $\eta(x)=\eta(y)=0$, we obtain that
\[
k^{\parallel}(\al_{\f};p)=k(\al_{\f};p)
+\frac{\pi_1(S(x),S(y),y,x)}{\pi_1(x,y,y,x)},
\]
\[
\widetilde k^{\parallel}(\al_{\f};p)=\widetilde k(\al_{\f};p)
+\frac{\widetilde\pi_1(\tS(x),\tS(y),y,x)}{\widetilde\pi_1(x,y,y,x)}.
\]
Let $\al_{\bot}$ be a $\f$-totally real section orthogonal to $\xi$ with a basis $\{x,y\}$. Then, using \eqref{kDk} and $\eta(x)=\eta(y)=0$, we get that
\[
k^{\parallel}(\al_{\bot};p)=k(\al_{\bot};p)
+\frac{\pi_1(S(x),S(y),y,x)}{\pi_1(x,y,y,x)},
\]
\[
\widetilde k^{\parallel}(\al_{\bot};p)=\widetilde k(\al_{\bot};p)
+\frac{\widetilde \pi_1(\tS(x),\tS(y),y,x)}{\widetilde \pi_1(x,y,y,x)}.
\]

Let us remark that, in the case when $\al$ is a $\f$-totally real section non-orthogonal to $\xi$ regarding $g$ or $\tg$, the relation between the corresponding sectional curvatures regarding $\n^{\parallel}$ and $\n$ ($\tD$ and $\nn$, respectively) is just the first (respectively, the second) equality in \eqref{kDk}.


\section{A family of Lie groups as manifolds of the studied type}\label{sec-exm}

Let us consider as an example
the $(2n+1)$-dimensional almost paracontact almost paracomplex
Riemannian
manifold $(\LLL,\f, \xi, \eta, g)$, which is introduced in \cite{ManTav57}.
That means we have
a real connected Lie group $\LLL$ and its
associated Lie algebra with a global basis $\{E_{0},E_{1},\dots,
E_{2n}\}$ of left invariant vector fields on $\LLL$ defined by:
\begin{equation}\label{com}
    [E_0,E_i]=-a_iE_i-a_{n+i}E_{n+i},\qquad
    [E_0,E_{n+i}]=-a_{n+i}E_i+a_{i}E_{n+i},
\end{equation}
where $a_1,\dots,a_{2n}$ are real constants and $[E_j,E_k]$ vanishes in
other cases.
The structure $(\f,\xi,\eta)$   is determined
for any
${i\in\{1,\dots,n\}}$ as follows:
\begin{equation}\label{strL}
\begin{array}{lll}
\f E_0=0,\qquad & \f E_i=E_{n+i},\qquad & \f E_{n+i}=E_i, \\[0pt]
\xi=E_0, \qquad  & \eta(E_0)=1, \qquad & \eta(E_i)=\eta(E_{n+i})=0.
\end{array}
\end{equation}

Moreover, the Riemannian metric $g$ is such that:
\begin{equation}\label{gL}
\begin{array}{l}
  g(E_0,E_0)=g(E_i,E_i)=g(E_{n+i},E_{n+i})=1, \\
  g(E_0,E_j)=g(E_j,E_k)=0,
\end{array}
\end{equation}
where $i\in\{1,\dots,n\}$ and $j, k \in\{1,\dots,2n\}$, $j\neq k$.

Let us remark that the same Lie group is considered with an
almost contact structure and a compatible Riemannian
metric in \cite{Ol} and the generated almost cosymplectic manifold is studied.
On the other hand,
the same Lie group is equipped with an almost contact structure and
B-metric in \cite{HM} and the obtained manifold is characterized. Moreover, for the latter manifold,
the case of the lowest dimension is considered and properties of the constructed
manifold are determined in \cite{HManMek}.

Let us consider the constructed almost paracontact almost paracomplex Riemannian
manifold
$(\LLL,\f, \xi, \allowbreak{}\eta, g)$ of dimension 3, i.e. for $n=1$. We use the following results from \cite{ManTav57}.
The basic components of the Levi-Civita connection $\n$ of $g$ are given by
\begin{equation}\label{nEi}
\begin{array}{ll}
    \n_{E_1}E_0=a_1E_1+a_2E_2,\qquad & \n_{E_2}E_0=a_2E_1-a_1E_2,\\[0pt]
    \n_{E_1}E_1=-\n_{E_2}E_2=-a_1E_0,\qquad & \n_{E_1}E_2=\n_{E_2}E_1=-a_2E_0,
\end{array}
\end{equation}
and the others $\n_{E_i}E_j$ are zero.
The  components $F_{ijk}=F(E_i,E_j,E_k)$
of the fundamental tensor are the following
\[
F_{101}=F_{110}=F_{202}=F_{220}=-a_2,\qquad
F_{102}=F_{120}=-F_{201}=-F_{210}=-a_1,
\]
and the other components of $F$  are zero.
Thus, it is proved the following

\begin{prop}\label{prop-exa1}
The constructed 3-dimen\-sional almost paracontact almost paracomplex Riemannian
manifold $(\LLL,\f,\xi,\eta,g)$ belongs to:
\begin{enumerate}
  \item
 $\F_4\oplus\F_9$ if and only if $a_1\neq 0$, $a_2\neq 0$;
  \item
 $\F_4$ if and only if $a_1= 0$, $a_2\neq 0$;
  \item
  $\F_9$ if and only if $a_1\neq 0$, $a_2= 0$;
    \item
  $\F_0$ if and only if $a_1= 0$, $a_2= 0$.
\end{enumerate}
\end{prop}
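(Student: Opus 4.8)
The plan is to compute the Lee forms $\theta$, $\theta^{*}$ and $\omega$ from the components $F_{ijk}$ just listed, substitute them into the characteristic conditions \eqref{Fi} of the basic classes, and match. Since $\{\xi;E_1,E_2\}$ is $g$-orthonormal, the inverse metric is the identity, so $\theta(z)=\sum_{i=1}^{2}F(E_i,E_i,z)$, $\theta^{*}(z)=\sum_{i=1}^{2}F(E_i,\f E_i,z)$ and $\omega(z)=F(\xi,\xi,z)=F(E_0,E_0,z)$. From the given nonzero components (all of which have first index $1$ or $2$, i.e. none have first argument $E_0$) one reads off immediately $\omega=0$; $\theta(E_0)=F_{110}+F_{220}=-2a_2$, $\theta(E_1)=F_{111}+F_{221}=0$, $\theta(E_2)=F_{112}+F_{222}=0$, so $\theta=-2a_2\,\eta$; and $\theta^{*}(z)=F(E_1,E_2,z)+F(E_2,E_1,z)$, giving $\theta^{*}(E_0)=F_{120}+F_{210}=-2a_1$, $\theta^{*}(E_1)=\theta^{*}(E_2)=0$, so $\theta^{*}=-2a_1\,\eta$. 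In particular $\theta(\xi)=-2a_2$ and $\theta^{*}(\xi)=-2a_1$, while $\omega=0$.

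Next I would split $F$ into its ``$a_2$-part'' and its ``$a_1$-part''. The $a_2$-part has nonzero components $F_{101}=F_{110}=F_{202}=F_{220}=-a_2$; comparing with the $\F_4$ formula $F(x,y,z)=\frac{1}{2n}\theta(\xi)\{g(\f x,\f y)\eta(z)+g(\f x,\f z)\eta(y)\}$ with $n=1$, $\theta(\xi)=-2a_2$, one checks on basis vectors that this reproduces exactly those four components and nothing else (note $g(\f E_i,\f E_i)=1$ for $i=1,2$ and $g(\f E_0,\cdot)=0$), so the $a_2$-part lies in $\F_4$. The $a_1$-part has nonzero components $F_{102}=F_{120}=-F_{201}=-F_{210}=-a_1$; this should match $\F_9$, whose defining relations are $F(x,y,z)=F(x,y,\xi)\eta(z)+F(x,z,\xi)\eta(y)$ together with $F(x,y,\xi)=-F(y,x,\xi)=-F(\f x,\f y,\xi)$. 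One verifies $F(x,y,\xi)$ is nonzero only for $(x,y)\in\{(E_1,E_2),(E_2,E_1)\}$ with $F_{120}=-a_1=-F_{210}$, that $F(\f E_1,\f E_2,\xi)=F_{210}=a_1=-F_{120}$, and that the first relation then regenerates precisely the four $a_1$-components; hence the $a_1$-part lies in $\F_9$. Thus in general $(\LLL,\f,\xi,\eta,g)\in\F_4\oplus\F_9$.

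Finally I would read off the four cases. If $a_1\neq0$ and $a_2\neq0$ both parts are genuinely present, and since $\F_4$ is detected by $\theta(\xi)=-2a_2$ and $\F_9$ by the component $F_{120}=-a_1$, neither summand can be dropped, so the manifold is in $\F_4\oplus\F_9$ but in no smaller class; if $a_1=0$, $a_2\neq0$ only the $\F_4$-part survives (and is nonzero), giving exactly $\F_4$; if $a_1\neq0$, $a_2=0$ only the $\F_9$-part survives, giving exactly $\F_9$; if $a_1=a_2=0$ then $F=0$, i.e. the manifold is in $\F_0$. The only mildly delicate point is checking that the $a_1$-part really satisfies all the sign conditions of $\F_9$ (rather than, say, $\F_7$ or $\F_8$), which comes down to the computation $F(\f x,\f y,\xi)=-F(x,y,\xi)$ on the basis; this is the step I would write out most carefully, everything else being a direct substitution into \eqref{Fi}.
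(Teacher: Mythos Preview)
Your approach is the same as the paper's: the paper simply records the components $F_{ijk}$ (quoted from \cite{ManTav57}) and reads off the class membership from \eqref{Fi}, and you make that matching explicit. The argument is correct.

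There is one harmless arithmetic slip. From the chain $F_{102}=F_{120}=-F_{201}=-F_{210}=-a_1$ one has $F_{210}=a_1$, so
\[
\theta^{*}(\xi)=F_{120}+F_{210}=-a_1+a_1=0,
\]
not $-2a_1$. This does not affect your proof, since you never use $\theta^{*}$ in the subsequent matching; you identify the $\F_4$-part via $\theta(\xi)=-2a_2$ and the $\F_9$-part by verifying the defining symmetry relations directly. In fact $\theta^{*}(\xi)=0$ is exactly what one should expect here, since $\F_5$ is absent and the $a_1$-part lives in $\F_9$, as you check.
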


Let us remark that $(\LLL,\f,\xi,\eta,g)$ is a para-Sasakian paracomplex Riemannian manifold
if and only if $a_1= 0$, $a_2=1$ \cite{ManTav57}.

Using \eqref{SvK=n} and \eqref{tSvK=n}, we obtain that all components $\n^\parallel_{E_i}E_j$ and $\tn^\parallel_{E_i}E_j$ vanish.
That means the pair of of associated Schouten-van Kampen connections $\n^\parallel$ and $\tn^\parallel$ coincide with the so-called
Weitzenbock connection, the connection of the parallelization. Therefore $\n^\parallel$ and $\tn^\parallel$ have vanishing curvature, but in general nonvanishing
torsion, because it equals to $-[\,\cdot\,,\,\cdot\,]$.

A quick review of the statements proved in previous sections shows that the example in this section confirms them.

\vspace{6pt}

\end{document}